\newtheorem{theorem}{Theorem}[section]
\newtheorem{lemma}[theorem]{Lemma}
\newtheorem{corollary}[theorem]{Corollary}
\newtheorem*{theorem*}{Theorem}
\newtheorem*{conjecture}{Conjecture}
\theoremstyle{definition}
\newtheorem{definition}[theorem]{Definition} 
\newtheorem{example}[theorem]{Example} 
\newtheorem{eg}[theorem]{Example}
\newtheorem{remark}[theorem]{Remark}
\newcommand{\ee}[1]{\mathbb{E}\left[ #1 \right]} 
\newcommand{\cee}[2]{\mathbb{E}\left[\left. #1 \right| #2 \right]} 
\newcommand{\var}[1]{Var\left( #1 \right)}
\author{ Tulasi Ram Reddy \footnote{ Research supported in
		part by ISF-UGC post-doctoral fellowship. The results of this article are based on the author's PhD Thesis \cite{atrthesis} written at Department of Mathematics, IISc, Bangalore (supported in part by UGC, under SAP-DSA Phase IV).}  }
\title{Limiting empirical distribution of zeros and critical points of random polynomials agree in general.}
\date{}
\begin{document}
\maketitle

\begin{abstract}
	In this article, we study critical points (zeros of derivative) of random polynomials. Take two deterministic sequences $\{a_n\}_{n\geq1}$ and $\{b_n\}_{n\geq1}$ of complex numbers whose limiting empirical measures are same. By choosing $\xi_n = a_n$ or $b_n$ with equal probability, define the sequence of polynomials by $P_n(z)=(z-\xi_1)\dots(z-\xi_n)$. We show that the limiting measure of zeros and critical points agree for this sequence of random polynomials under some assumption. We also prove a similar result for triangular array of numbers. A similar result for zeros of generalized derivative (can be thought as random rational function) is also proved. Pemantle and Rivin initiated the study of critical points of random polynomials. Kabluchko proved the result considering the zeros to be i.i.d. random variables.
\end{abstract}
\begin{keywords}
Random polynomials, Random rational functions, Zeros, Critical points, Gauss-Lucas theorem, Potential theory.  
\end{keywords}

\section{Introduction}
The oldest known result relating the  zeros and critical points of a polynomial is Gauss-Lucas theorem, which states that the critical points of any polynomial with complex coefficients lie inside the convex hull formed by the zeros of the polynomial. In general nothing more can be said. Our interest in this article is dealing with sequences of polynomials, usually randomness included, with increasing degrees. We consider the case in which the point cloud made from the zeros of these polynomials converges to a probability measure in the complex plane. We want to understand the behavior of critical points of these sequences of polynomials. We recall the definition of weak convergence.

\begin{definition}
	For a sequence of probability measures, $\{\mu_n\} \text{ and } \mu$ on $\text{$\mathbb{C}$}$, we say that $\mu_n \xrightarrow{w} \mu$ \textit{weakly}, if  for any $ f\in C_c^{\infty}(\text{$\mathbb{C}$})$, we have $\lim\limits_{n \rightarrow \infty}\int_{X}^{}fd\mu_n = \int_{X}^{}fd\mu$. 
\end{definition}

We deal with sequence of complex numbers whose empirical measure converge to a probability measure. 

\begin{definition}
	Let $\{a_{n}\}_{ n \geq 1}$ $\left(\{a_{n,i}\}_{n\geq1; 1 \leq i \leq n}\right)$ be a sequence (triangular array) of numbers such that $\frac{1}{n}\sum_{i=1}^{n}\delta_{a_{i}}$ $\left(\frac{1}{n}\sum_{i=1}^{n}\delta_{a_{n,i}}\right)$ converge weakly to a probability measure $\mu$, we call such a sequence (triangular array) to be $\mu$-distributed. Here $\delta_a$ denotes the Dirac measure supported at $a$. 
\end{definition}

The question was raised by Pemantle and Rivin in \cite{pemantle} whether it is true that if the limiting measure of zeros of a sequence of polynomials converge to a probability measure $\mu$, then the corresponding limiting measure of critical points also converge to $\mu$. It is false in general as there are counterexamples. For convenience we will introduce the following notation. For any polynomial $P$, let $Z(P)$ denote the multi-set of zeros of $P$ and $\text{$\mathscr{M}$}(P)$ to be the uniform probability measure on $Z(P)$.   

In the case where all the zeros of the polynomials are real, because the zeros and critical points interlace, the empirical measure of zeros and critical points agree in limit. We now look at some examples where the limiting measure of zeros and critical points do not agree.

 The most commonly quoted~\cite{pemantle} sequence of polynomials where the limiting measure of zeros and limiting measure of critical points do not agree is $P_n(z)=z^n-1$. In this case the limiting zero measure $\lim\limits_{n\rightarrow \infty}\mathscr{M}(P_n)$ is the uniform probability measure on $S^1$ and the limiting critical point measure $\lim\limits_{n\rightarrow \infty}\mathscr{M}(P_n')$ is the Dirac measure at origin. In the spirit of this example we construct new set of examples for which the limiting measures of zeros and critical points are different.

\begin{eg}
	Recall that if a polynomial has all zeros real, then all its critical points have to be real and are interlaced between the zeros of the polynomial. Consider the polynomial $P_n(z)=(z-a_1^n)(z-a_2^n)\dots(z-a_k^n)$, where $a_1,a_2,\dots,a_k$ are real numbers such that $0 < a_1 < a_2 < \dots < a_k$. Define the sequence of polynomials to be $Q_n(z)=P_n(z^n)$, then $Q_n'(z)=nz^{n-1}P_n'(z^n)$. The zero set of $Q_n$ is \[Z(Q_n)=\bigcup\limits_{j=1}^{k}\bigcup\limits_{\ell=1}^{n}\{a_je^{ 2\pi i\frac{\ell}{n}}\}.\] Where as the zero set of $Q_n'$ is \[Z(Q_n')=\left(\bigcup\limits_{j=1}^{k-1}\bigcup\limits_{\ell=1}^{n} \{b_{j,n}^{\frac{1}{n}}e^{ 2\pi i\frac{\ell}{n}}\}\right)\bigcup\{0,0,\dots,0\},\] where $b_{1,n},b_{2,n},\dots,b_{k-1,n}$ are the zeros of the polynomial $P_n'(z)$. The probability measure $\text{$\mathscr{M}$}(Q_n')$ has mass $\frac{n-1}{kn-1}$ at $0$, hence its limiting measure will have mass $\frac{1}{k}$ at $0$. On the other hand the probability measure $\text{$\mathscr{M}$}(Q_n)$ is supported on $\bigcup\limits_{j=1}^{k}a_jS^1$. Hence the limiting measures do not agree.
\end{eg}
\begin{eg}
 Choose a polynomial $P$, whose zeros are in $\text{$\mathbb{D}$}_r$, where $r<1$. Define $Q_n(z)=P^n(z)-1$, then $Q_n'=nP^{n-1}(z)P'(z)$. If $z$ is a zero of $Q_n(z)$, then it satisfies $P^n(z)=1$, or $|P(z)|=1$. Therefore the limiting zero measure of $Q_n(z)$ is supported on the boundary of the polynomial lemniscate $\{z:|P(z)|\leq1\}$ of the polynomial $P$. The limiting zero measure for the sequence $\{Q_n\}_{n \geq 1}$ exists because $Q_n$ is the $nk$-th Chebyshev polynomial of the polynomial lemniscate of $P$. Hence the limiting zero measure is the equilibrium measure for the domain $\{z:|P(z)|\leq1\}$ (see Chapter 5 in \cite{ransford}). Where as, if $z_1,z_2,\dots,z_k$ are the roots of the polynomial $P$, then the limiting zero distribution of $Q_n'$ will be $\frac{1}{k}\sum_{i=1}^{k}\delta_{z_i}$. Hence the limiting measures of zeros and critical points of the given sequence of polynomials do not agree. 
\end{eg}

Before we discuss the above question, we recall the modes of convergence for random measures.
\begin{definition}\label{modes of convergence}
	Let $\mathcal{P}(\mathbb{C})$ be the set of probability measures on the complex plane, equipped with \textit{weak topology}. Let $\{\mu_n\}_{n\geq1}$ be a sequence in $\mathcal{P}(\mathbb{C})$ and   $\mu \in \mathcal{P}(\mathbb{C})$ we say,
	\begin{itemize}
		\item $\mu_n \xrightarrow{w} \mu$ in probability if $\lim\limits_{n\rightarrow\infty}\mathbb{P}(\mu_n \in N_\mu)=1$ for any  neighbourhood $N_\mu$ of $\mu$,
		
		\item $\mu_n \xrightarrow{w} \mu$ almost surely if $\mathbb{P}(\lim\limits_{n\rightarrow\infty}\mu_n \in N_\mu)=1$ for any neighbourhood $N_\mu$ of $\mu$.
	\end{itemize}
\end{definition}

Pemantle and Rivin in \cite{pemantle} considered a sequence of random polynomials whose zeros are i.i.d. with law $\mu$ having finite 1-energy and proved that the empirical law of critical points converge weakly to the same probability measure $\mu$.

We prove the result for a specific class of sequences (triangular arrays) which we call as \textit{log-Ces\'{a}ro-bounded} which is defined as follows.

\begin{definition}\label{def:log-cesaro}
	We say a sequence (triangular array) of complex numbers $\{a_{n}\}_{n\geq1}$ $\left(\{a_{n,i}\}_{n\geq1; 1 \leq i \leq n}\right)$ to be \textit{log-Ces\'{a}ro-bounded} if the Ces\'{a}ro means of the positive part of their logarithms are bounded i.e., the sequence $\{\frac{1}{n}\sum_{i=1}^{n}\log_+|a_{i}|\}_{n\geq 1}$ $\left(\{\frac{1}{n}\sum_{i=1}^{n}\log_+|a_{n,i}|\}_{n\geq 1}\right)$ is bounded.
\end{definition}

\begin{theorem}\label{thm2}
	Let $\{a_k\}_{k\geq1}$ and $\{b_k\}_{k\geq1}$ be two $\mu$-distributed and log-Ces\'{a}ro bounded sequences of complex numbers. Additionally assume that, $a_k \neq b_k$ for infinitely many $k$.	Define the sequence of independent random variables $\xi_k$ such that $\xi_k = a_k $ or $b_k$ with equal probability, for $k\geq1$. Define the polynomials $P_n(z):=(z-\xi_1)(z-\xi_2)\dots(z-\xi_n)$. Then, $\text{$\mathscr{M}$}(P_n)\xrightarrow{w}\mu$ almost surely and $\text{$\mathscr{M}$}(P_n')\xrightarrow{w}\mu$ in probability.  

\end{theorem}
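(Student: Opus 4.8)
The plan is to work with the logarithmic potential (equivalently, the empirical spectral-type measure via the Cauchy/Stieltjes transform). Recall that for a polynomial $P_n$ of degree $n$, $\frac{1}{n}\log|P_n(z)| = \int \log|z-w|\,d\text{$\mathscr{M}$}(P_n)(w) =: U_n(z)$, and that weak convergence $\text{$\mathscr{M}$}(P_n)\xrightarrow{w}\mu$ is implied by $L^1_{loc}$ convergence of $U_n$ to the logarithmic potential $U_\mu$ of $\mu$ (since $\Delta U_n = 2\pi\,\text{$\mathscr{M}$}(P_n)$ in the distributional sense, and convergence of potentials in $L^1_{loc}$ forces weak convergence of the Laplacians). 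The first step is to establish $\text{$\mathscr{M}$}(P_n)\xrightarrow{w}\mu$ almost surely: by the law of large numbers applied pointwise in $z$ (for $z$ outside a fixed countable dense set, say), $U_n(z) = \frac{1}{n}\sum_{k=1}^n \log|z-\xi_k|$ has expectation $\frac{1}{n}\sum_{k=1}^n \tfrac12(\log|z-a_k| + \log|z-b_k|)$, which converges to $U_\mu(z)$ because both $\{a_k\}$ and $\{b_k\}$ are $\mu$-distributed; the log-Ces\`{a}ro-bounded hypothesis supplies the uniform integrability / truncation control needed near the singularities $z = a_k, b_k$ and at infinity so that the convergence of the averaged potentials is legitimate and the fluctuation term $U_n(z) - \ee{U_n(z)}$ vanishes a.s. (a Kolmogorov-type three-series or $L^2$ argument after truncating $\log$). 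Then a Fubini/Vitali argument upgrades a.s. pointwise convergence to a.s. $L^1_{loc}$ convergence of $U_n$, giving the first conclusion.

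The second and substantive step concerns the critical points. Write $P_n'/P_n(z) = \sum_{k=1}^n \frac{1}{z-\xi_k}$. Let $\text{$\mathscr{M}$}(P_n')$ be the empirical measure of the $n-1$ zeros of $P_n'$, with logarithmic potential $V_n(z) = \frac{1}{n-1}\log|P_n'(z)|$. The key algebraic identity is
\begin{equation}
P_n'(z) = P_n(z)\sum_{k=1}^n \frac{1}{z-\xi_k},
\end{equation}
so that
\begin{equation}
\frac{1}{n}\log|P_n'(z)| = \frac{1}{n}\log|P_n(z)| + \frac{1}{n}\log\Bigl| \sum_{k=1}^n \frac{1}{z-\xi_k} \Bigr|.
\end{equation}
The first term on the right converges a.s. to $U_\mu(z)$ by the first part. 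Hence it suffices to show that $\frac{1}{n}\log\bigl| \frac1n\sum_{k=1}^n \frac{1}{z-\xi_k} \bigr| \to 0$ in $L^1_{loc}$ in probability (the $\frac1n$ inside contributes $-\frac1n\log n \to 0$). Since $\frac1n\sum_k \frac{1}{z-\xi_k} \to S_\mu(z) := \int \frac{d\mu(w)}{z-w}$ (the Cauchy transform), which is finite and nonzero for $z$ outside the support of $\mu$ and, crucially, is \emph{not identically zero on any open set} (as $\mu$ is not the zero measure), this term tends to $0$ pointwise a.e.; the only danger is that $\frac1n\sum_k \frac{1}{z-\xi_k}$ is anomalously small on a set of non-negligible measure. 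This is where the hypothesis $a_k\neq b_k$ infinitely often, together with independence, enters: it prevents the random Cauchy sum from degenerating (in the deterministic counterexamples such as $z^n-1$ the sum $\frac{n z^{n-1}}{z^n-1} = \frac1z \cdot \frac{n}{1 - z^{-n}}$ has exactly the cancellation that this hypothesis rules out with probability one for the randomized model).

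To make the last point rigorous, the plan is to show that $\ee{\log^-\bigl|\frac1n\sum_k \frac{1}{z-\xi_k}\bigr|}$ is $o(n)$ uniformly on compact sets, or more robustly, to bound $\int_K \mathbb{P}\bigl( \bigl|\frac1n\sum_k \frac{1}{z-\xi_k}\bigr| < e^{-\epsilon n}\bigr)\,dz$ via an anti-concentration estimate: conditionally on all $\xi_j$ with $j\neq k_0$ for some index $k_0$ with $a_{k_0}\neq b_{k_0}$, the sum takes two values differing by $\frac12|\frac{1}{z-a_{k_0}} - \frac{1}{z-b_{k_0}}|$, so it cannot be small with probability exceeding $\tfrac12$ unless $z$ is within $e^{-\epsilon n}$ of a specific point — a set of measure $O(e^{-2\epsilon n})$. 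Averaging such a Littlewood--Offord / Berry--Esseen-flavored bound over many independent indices $k$ (there are infinitely many with $a_k\neq b_k$) yields exponential decay, which when integrated against $dz$ on a compact set and combined with the a.e. convergence and a uniform-integrability bound (again from log-Ces\`{a}ro-boundedness, controlling the $\log^+$ part and the tail at infinity) gives $\frac1n\log\bigl|\sum_k \frac{1}{z-\xi_k}\bigr| \to 0$ in $L^1_{loc}$ in probability. Combining with the first part yields $V_n \to U_\mu$ in $L^1_{loc}$ in probability, hence $\text{$\mathscr{M}$}(P_n')\xrightarrow{w}\mu$ in probability. The main obstacle is precisely this anti-concentration step: controlling the Lebesgue measure of the ``bad set'' where the random Cauchy sum is exponentially small, uniformly enough to survive integration, and reconciling it with the possibility that $\mu$ has atoms or unbounded support (handled by the log-Ces\`{a}ro truncation). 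I expect the a.s. statement for zeros to be routine and the in-probability statement for critical points to require the genuinely new idea.
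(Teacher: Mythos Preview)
Your strategy matches the paper's almost exactly: the paper also reduces to showing $\frac{1}{n}\log|L_n(z)|\to 0$ for $L_n=P_n'/P_n=\sum_k(z-\xi_k)^{-1}$, handles the lower bound by an anti-concentration estimate (it invokes the Kolmogorov--Rogozin inequality, which is precisely your ``condition on all but one coin'' Littlewood--Offord step), proves $L^2$-tightness of $\frac{1}{n}\log|L_n|$ on disks using log-Ces\`aro boundedness, and then applies a lemma of Tao--Vu to upgrade pointwise convergence plus $L^2$-tightness to $L^1_{\mathrm{loc}}$ convergence in probability. Two small corrections: the anti-concentration bound one actually gets is $O(|S_n|^{-1/2})$ where $S_n=\{k\le n:a_k\neq b_k\}$, not exponential decay---but $|S_n|\to\infty$ is all that is needed---and the value of the Cauchy transform $S_\mu(z)$ plays no role (so you need not worry about where it vanishes).
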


\begin{remark}
	For the assertion of the above Theorem \ref{thm2} to hold, it is necessary to assume that the two sequences differ in infinitely many terms. Suppose not, we may choose one of the sequence to be a sequence for which the assertion of the theorem doesn't hold. Since both the sequences differ only in finitely many terms, the resulting sequence will be same as that of the sequence for which the assertion doesn't hold, with positive probability. Hence the statement of the Theorem \ref{thm2} doesn't hold.
\end{remark}
In the following example we will see a deterministic sequence, where the limiting empirical measures of zeros and critical points do not agree for the sequence of polynomials made through considering the terms of the sequences as zeros.
\begin{example}
Let the sequence $\{z_n\}_{n\geq1}$, be defined recursively as follows. $z_1=1$, $z_2=-1$ and for $1 \leq k\leq 2^{n+1}$, define $z_{2^n+k}=z_ke^{\frac{2\pi i}{2^{n+1}}}$. It can be verified that this sequence is $\mu$-distributed, where $\mu$ is uniform probability measure on $S^1$. Define $P_n(z)=\prod\limits_{k=1}^{n}(z-z_k)$. Then $\mathscr{M}(P_n)\rightarrow \mu$ whereas $\mathscr{M}(P_{2^n}')=\delta_0$.
\end{example}

Theorem \ref{thm2} can be used to obtain corollaries of the following form. Choose a deterministic sequence which is $\mu$-distributed and perturb each of its term by a random variable with diminishing variances. It can be shown that the empirical measure of the critical points of the polynomial, made from the perturbed sequence also converge to the same limiting probability measure $\mu$.

\begin{corollary}\label{Symmetric perturbations}
	Let $\{u_n\}_{n\geq1}$ be a $\mu$-distributed and log-Ces\'{a}ro bounded sequence. Let $\{v_n\}_{n\geq1}$ be the sequence such that $v_n=u_n+\sigma_nX_n$, where $X_n$s are i.i.d  random variables satisfying $X_n\stackrel{d}{=}-X_n$, $\ee{\text{$|X_n|$}}<\infty$  and $\sigma_n \downarrow 0$, $\sigma_n\neq0$. Define the polynomial $P_n(z):=(z-v_1)(z-v_2)\dots(z-v_n)$. Then,  $\text{$\mathscr{M}$}(P_n)\xrightarrow{w}\mu$ almost surely and  $\text{$\mathscr{M}$}(P_n')\xrightarrow{w}\mu$  in probability.
\end{corollary}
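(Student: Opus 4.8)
The plan is to recognise $\{v_n\}$ as a disguised instance of the coin-flip model of Theorem~\ref{thm2}, obtained after conditioning on the perturbations. Let $\{s_n\}_{n\ge1}$ be i.i.d.\ signs, equal to $+1$ or $-1$ with probability $\tfrac12$ each, independent of $\{X_n\}_{n\ge1}$, and put $a_n:=u_n+\sigma_nX_n$, $b_n:=u_n-\sigma_nX_n$ and $\xi_n:=u_n+s_n\sigma_nX_n$, so that $\xi_n$ equals $a_n$ or $b_n$ with probability $\tfrac12$ each. Since $X_n\stackrel{d}{=}-X_n$ we have $\mathcal L(u_n-\sigma_nX_n)=\mathcal L(u_n+\sigma_nX_n)$, hence $\mathcal L(\xi_n)=\mathcal L(v_n)$, and by independence across $n$ the sequences $(\xi_n)_{n\ge1}$ and $(v_n)_{n\ge1}$ have the same law. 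It therefore suffices to establish the two conclusions for $P_n(z)=\prod_{k\le n}(z-\xi_k)$.

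Next I would condition on $\mathcal F:=\sigma(X_k:k\ge1)$ and argue that, on an event of probability one, the now deterministic sequences $\{a_n\}$ and $\{b_n\}$ satisfy all the hypotheses of Theorem~\ref{thm2}: each is $\mu$-distributed and log-Ces\'{a}ro bounded, and $a_k\ne b_k$ for infinitely many $k$. Granting this, Theorem~\ref{thm2}, applied conditionally on $\mathcal F$ with the remaining randomness being exactly the signs $s_n$, yields $\mathscr M(P_n)\xrightarrow{w}\mu$ almost surely and $\mathscr M(P_n')\xrightarrow{w}\mu$ in probability, \emph{conditionally} on $\mathcal F$. Removing the conditioning is routine: integrating the conditional a.s.\ statement over $\mathcal F$ gives the unconditional one, and $\mathbb P(\mathscr M(P_n')\notin N_\mu)=\mathbb E\big[\mathbb P(\mathscr M(P_n')\notin N_\mu\mid\mathcal F)\big]\to0$ by dominated convergence, the integrand tending to $0$ a.s.\ and being bounded by $1$.

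It remains to verify the three almost sure properties of $\{a_n\}$ (those of $\{b_n\}$ are identical). For $\mu$-distribution: given $f\in C_c^\infty(\mathbb C)$ one has $\big|\tfrac1n\sum_{i\le n}f(a_i)-\tfrac1n\sum_{i\le n}f(u_i)\big|\le\|\nabla f\|_\infty\cdot\tfrac1n\sum_{i\le n}\sigma_i|X_i|$, and $\tfrac1n\sum_{i\le n}\sigma_i|X_i|\to0$ a.s.\ (split the sum at an index beyond which $\sigma_i<\varepsilon$ and use the strong law for the i.i.d.\ variables $|X_i|$, which have finite mean, to bound the $\limsup$ by $\varepsilon\,\mathbb E|X_1|$); checking this along a fixed countable convergence-determining family of test functions then gives a single full-probability event on which $\tfrac1n\sum_{i\le n}\delta_{a_i}\xrightarrow{w}\mu$. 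For log-Ces\'{a}ro boundedness, the elementary inequalities $\log_+(xy)\le\log_+x+\log_+y$ and $\log_+(x+y)\le\log2+\log_+x+\log_+y$ give $\log_+|a_i|\le\log2+\log_+|u_i|+\log_+\sigma_1+\log_+|X_i|$, whose Ces\'{a}ro average stays bounded a.s.\ because $\tfrac1n\sum_{i\le n}\log_+|u_i|$ is bounded by hypothesis and $\tfrac1n\sum_{i\le n}\log_+|X_i|\to\mathbb E\log_+|X_1|\le\mathbb E|X_1|<\infty$ a.s.\ by the strong law. Finally $a_k-b_k=2\sigma_kX_k$ with $\sigma_k\ne0$, so $a_k\ne b_k$ precisely when $X_k\ne0$; taking the perturbation to be genuine, i.e.\ $\mathbb P(X_1=0)<1$ (which the hypothesis $\sigma_n\ne0$ is meant to make meaningful, since for a trivial perturbation the conclusion can fail, as the deterministic example preceding this corollary shows), the independent events $\{X_k\ne0\}$ occur infinitely often a.s.\ by the second Borel--Cantelli lemma. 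The crux, as I see it, is this bundle of almost sure verifications---that the vanishing perturbation preserves both the limiting measure and the log-Ces\'{a}ro control while still leaving infinitely many genuine discrepancies---after which the appeal to Theorem~\ref{thm2} is immediate.
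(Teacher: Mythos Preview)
Your proof is correct and follows the same strategy as the paper: exploit the symmetry $X_n\stackrel{d}{=}-X_n$ to recast the perturbed sequence as an instance of the coin-flip model of Theorem~\ref{thm2}, after conditioning. The paper conditions on the event $\{X_n=\pm r_ne^{i\theta_n}\}$ and invokes Lemmas~\ref{lemma:perturb_limit_measure} and~\ref{lemma:log-cesaro-bounded} for the $\mu$-distribution and log-Ces\'aro bounds, whereas you introduce auxiliary independent signs $s_n$ and condition on $\sigma(X_k:k\ge1)$; your device is arguably cleaner since it sidesteps conditioning on a null event, but the content is identical, and your inline verifications of $\mu$-distribution and log-Ces\'aro boundedness are just direct reproofs of those two lemmas. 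Your explicit remark that $\mathbb P(X_1=0)<1$ is needed for $a_k\ne b_k$ infinitely often is a point the paper leaves implicit.
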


\begin{remark}
	In Corollary \ref{Symmetric perturbations}, we may choose the random variables $X_n$s to have complex Gaussian distribution or uniform distribution on the unit disk centered at $0$. In the case of complex Gaussian distributed random variables we get the result for unbounded perturbations and in the case of uniformly distributed random variables the perturbations are bounded. It can also be proved in the case where the limiting measure of zeros and the perturbed random zeros are supported on a same one dimensional set (for example unit circle).

\end{remark}

It is an easy fact (Page 15 in \cite{manjubook}) that if $\{X_n\}_{n \geq 1}$ is a sequence of i.i.d random variables that are not identically $0$ such that $\ee{\textbf{$\log_+|X_1|$}}<\infty$, then $\limsup\limits_{n \rightarrow \infty}|X_n|^\frac{1}{n}=1$. 
A special case of Theorem \ref{kabluchko} can be obtained as a corollary of the Theorem \ref{thm2}. The special case being the one in which the probability measure $\mu$ in consideration has bounded $\log_+$-moment.

\begin{corollary}\label{corollary4:kabluchko}
	Let $\mu$ be any probability measure on $\mathbb{C}$  satisfying $\int\limits_{\mathbb{C}}\log_+|z|d\mu(z)<\infty$. Let $X_1,X_2,\dots, X_n$ be i.i.d random variables distributed according to $\mu$. Define the polynomials  $P_n(z):=(z-X_1)(z-X_2)\dots(z-X_n)$. Then,  $\text{$\mathscr{M}$}(P_n)\xrightarrow{w}\mu$ almost surely and  $\text{$\mathscr{M}$}(P_n')\xrightarrow{w}\mu$  in probability.
\end{corollary}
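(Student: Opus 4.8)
The plan is to deduce Corollary \ref{corollary4:kabluchko} from Theorem \ref{thm2} by a coupling argument. The obstacle is that Theorem \ref{thm2} is phrased in terms of two \emph{deterministic} $\mu$-distributed log-Ces\'{a}ro-bounded sequences that differ infinitely often, whereas here the zeros are genuinely i.i.d. samples from $\mu$. So the first step is: sample the i.i.d. sequence $\{X_k\}_{k \ge 1}$ once and argue that, almost surely, it furnishes \emph{one} of the two sequences needed in Theorem \ref{thm2}. Concretely, I would let $\{X_k\}$ and an independent copy $\{X_k'\}$ be two i.i.d. sequences from $\mu$, and set $\xi_k$ to be $X_k$ or $X_k'$ with equal probability (independently). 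By symmetry the law of $\{\xi_k\}$ is again i.i.d.\ $\mu$, so it suffices to prove the conclusion for $P_n(z) = \prod_{k\le n}(z-\xi_k)$; then condition on the pair of sequences $(\{X_k\},\{X_k'\})$ and apply Theorem \ref{thm2} conditionally.

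Second, I would verify that the three hypotheses of Theorem \ref{thm2} hold almost surely for the conditioning pair. The $\mu$-distributedness of $\frac1n\sum_{k\le n}\delta_{X_k}$ is the Glivenko--Cantelli / law of large numbers statement for empirical measures of i.i.d.\ samples: for each fixed $f \in C_c^\infty(\mathbb{C})$, $\frac1n\sum_{k\le n} f(X_k) \to \int f\,d\mu$ a.s.\ by the strong law, and a countable dense (in the sup norm on compacts, with a tightness/truncation argument) family of test functions upgrades this to weak convergence of measures a.s.; the same holds for $\{X_k'\}$. For the log-Ces\'{a}ro bound, the hypothesis $\int \log_+|z|\,d\mu(z) < \infty$ is exactly what makes $\log_+|X_1|$ integrable, so by the strong law of large numbers $\frac1n\sum_{k\le n}\log_+|X_k| \to \int \log_+|z|\,d\mu < \infty$ a.s., hence this sequence is a.s.\ bounded; likewise for $\{X_k'\}$. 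Finally, $X_k \ne X_k'$ for infinitely many $k$ a.s.: if $\mu$ is non-degenerate then each event $\{X_k \ne X_k'\}$ has a fixed positive probability and they are independent, so Borel--Cantelli (second half) applies; if $\mu = \delta_a$ is a point mass the statement is trivial since then $P_n(z) = (z-a)^n$, $\mathscr{M}(P_n) = \mathscr{M}(P_n') = \delta_a = \mu$.

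Third, with all hypotheses verified on an event of probability one, Theorem \ref{thm2} applied conditionally on $(\{X_k\},\{X_k'\})$ gives $\mathscr{M}(P_n) \xrightarrow{w} \mu$ a.s.\ and $\mathscr{M}(P_n') \xrightarrow{w} \mu$ in probability, in each case conditionally; integrating out the conditioning (the conditional probabilities of the relevant events tend to $1$, so by bounded convergence the unconditional probabilities do too, and a.s.\ conditional convergence gives a.s.\ unconditional convergence) yields the two asserted modes of convergence for the i.i.d.\ model. Since $\{\xi_k\}$ has the same joint law as the original $\{X_k\}$ in the statement, this completes the proof.

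I expect the only real subtlety to be the measure-theoretic bookkeeping in the conditioning step --- making precise that ``apply Theorem \ref{thm2} for a.e.\ realization of the pair of sequences'' is legitimate, i.e.\ that the event on which the theorem's conclusion holds is measurable in the right product space and that conditional ``in probability'' convergence integrates up to ``in probability'' convergence. This is routine but deserves a sentence. Everything else --- the three hypotheses --- reduces to the strong law of large numbers (twice, once for $f(X_k)$ and once for $\log_+|X_k|$) and a one-line Borel--Cantelli argument, with the degenerate case $\mu = \delta_a$ handled separately as above.
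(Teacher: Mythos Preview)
Your proposal is correct and follows essentially the same route as the paper: realize the i.i.d.\ sequence as a fair coin-flip mixture of two independent i.i.d.\ $\mu$-sequences, verify (via the strong law and Borel--Cantelli, with the degenerate case handled separately) that the pair almost surely satisfies the hypotheses of Theorem~\ref{thm2}, and then apply that theorem conditionally. Your write-up is in fact more careful than the paper's, which omits the explicit check that $a_k\neq b_k$ infinitely often and the remark on conditioning.
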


We present a similar result of Theorem \ref{thm2} for triangular arrays of numbers.
\begin{theorem}\label{thm-tri}
	Let $\{a_{k,i}\}_{k\geq1;1\leq i \leq k}$ and $\{b_{k,i}\}_{k\geq1;1\leq i \leq k}$ be two $\mu$-distributed and log-Ces\'{a}ro bounded triangular arrays of complex numbers. Additionally assume that, $\sum\limits_{i =1}^{n}\log_+\frac{1}{|a_{n,i} - b_{n,i}|}=o(n^2)$.	Define the sequence of independent random variables $\xi_{k,i}$ such that $\xi_{k,i} = a_{k,i} $ or $b_{k,i}$ with equal probability, for  $1 \leq i \leq k$ and $k\geq1$. Define the sequence of polynomials whose $n$-th term is given by  $P_n(z):=(z-\xi_{n,1})(z-\xi_{n,2})\dots(z-\xi_{n,n})$. Then, $\text{$\mathscr{M}$}(P_n)\xrightarrow{w}\mu$ almost surely and $\text{$\mathscr{M}$}(P_n')\xrightarrow{w}\mu$ in probability.  
\end{theorem}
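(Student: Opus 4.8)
Following the scheme of the proof of Theorem \ref{thm2}, write
\[
p_n(z):=\tfrac1n\log|P_n(z)|=\int\log|z-w|\,d\mathscr{M}(P_n)(w),\qquad U_\mu(z):=\int\log|z-w|\,d\mu(w),
\]
and note that $P_n'/P_n=\sum_{i=1}^n(z-\xi_{n,i})^{-1}=:S_n$, so that $\tfrac1{n-1}\log|P_n'(z)|=\tfrac{n}{n-1}p_n(z)+\tfrac1{n-1}\log|S_n(z)|$. Since $L^1_{\mathrm{loc}}(\mathbb C)$ convergence of logarithmic potentials passes to their distributional Laplacians, $\tfrac1{2\pi}\Delta U_\mu=\mu$, and (as for Theorem \ref{thm2}) the family $\mathscr{M}(P_n')$ stays tight, it suffices to prove: (i) $p_n\to U_\mu$ in $L^1_{\mathrm{loc}}$ almost surely, and (ii) $\tfrac1n\log|S_n|\to0$ in $L^1_{\mathrm{loc}}$ in probability.

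For (i), first $\mathscr{M}(P_n)\xrightarrow{w}\mu$ a.s.: for fixed $f\in C_c^\infty(\mathbb C)$ the variables $f(\xi_{n,i})$, $1\le i\le n$, are independent and bounded, so Hoeffding's inequality gives $\mathbb P\big(|\tfrac1n\sum_i f(\xi_{n,i})-m_n|>\varepsilon\big)\le 2e^{-n\varepsilon^2/2\|f\|_\infty^2}$ with $m_n=\tfrac1{2n}\sum_i\big(f(a_{n,i})+f(b_{n,i})\big)$; the bound is summable, $m_n\to\tfrac12\int f\,d\mu+\tfrac12\int f\,d\mu=\int f\,d\mu$ because both arrays are $\mu$-distributed, and Borel--Cantelli applied along a countable convergence-determining family finishes this. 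Next, log-Ces\`aro boundedness is inherited by $\{\xi_{n,i}\}$ since $\tfrac1n\sum_i\log_+|\xi_{n,i}|\le\tfrac1n\sum_i\big(\log_+|a_{n,i}|+\log_+|b_{n,i}|\big)$ is bounded; this, the weak convergence just proved, and the uniform (in $w$) bound on $\int_K\log_-|z-w|\,dA(z)$ yield $p_n\to U_\mu$ in $L^1_{\mathrm{loc}}$ a.s. by the standard potential-continuity lemma (cf.\ \cite{ransford}).

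For (ii), fix a compact $K$. The $\log_+$ part is deterministic: $|S_n(z)|\le g_n(z):=\sum_i|z-\xi_{n,i}|^{-1}$, and since $\int_K|z-w|^{-1}dA(z)$ is bounded uniformly in $w$, $\|g_n\|_{L^1(K)}\le C_Kn$, whence a layer-cake estimate gives $\int_K\log_+g_n\,dA\le C_K\log n$ and $\tfrac1n\int_K\log_+|S_n|\,dA\to0$. For the $\log_-$ part, set $\xi_{n,i}=a_{n,i}$ or $b_{n,i}$ according as $\varepsilon_i=+1$ or $-1$ with $\varepsilon_i$ i.i.d.\ Rademacher, so that
\[
S_n(z)=A_n(z)+\sum_{i=1}^n\varepsilon_i c_i(z),\qquad c_i(z)=\frac{b_{n,i}-a_{n,i}}{2(z-a_{n,i})(z-b_{n,i})},
\]
with $A_n$ deterministic. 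By log-Ces\`aro boundedness of both arrays, all but $\eta n$ indices have $a_{n,i},b_{n,i}\in B(0,R)$ for suitable $R=R(\eta)$, giving $|c_i(z)|\ge c_R|b_{n,i}-a_{n,i}|$ for all $z\in K$; and $\sum_i\log_+\tfrac1{|a_{n,i}-b_{n,i}|}=o(n^2)$ forces, via Markov, all but $o(n)$ indices to satisfy $|a_{n,i}-b_{n,i}|\ge e^{-\omega_nn}$ with $\omega_n\to0$. Hence with $\delta_n:=c_Re^{-\omega_nn}$ at least $(1-2\eta)n$ indices have $|c_i(z)|\ge\delta_n$ for every $z\in K$, and conditioning on the remaining $\varepsilon_i$ and invoking the Erd\H{o}s--Littlewood--Offord anti-concentration inequality for sums of complex numbers gives $\mathbb P(|S_n(z)|<\delta_n)\le C/\sqrt n$ uniformly in $z\in K$. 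Now $\log_-|S_n(z)|\le\log\tfrac1{\delta_n}+\log_-|S_n(z)|\,\mathbbm{1}_{\{|S_n(z)|<\delta_n\}}$: the first term contributes $\tfrac1n\log\tfrac1{\delta_n}\,|K|=\omega_n|K|+o(1)\to0$, and for the second, Cauchy--Schwarz (in the probability space, then in $z$) bounds $\mathbb E\int_K\log_-|S_n(z)|\mathbbm{1}_{\{|S_n(z)|<\delta_n\}}dA(z)$ by $\big(C/\sqrt n\big)^{1/2}\big(|K|\int_K\mathbb E(\log_-|S_n(z)|)^2\,dA(z)\big)^{1/2}$, while $\int_K(\log_-|S_n(z)|)^2dA(z)=O(n^2)$ holds deterministically because $\log_-|S_n|\le\log_+|P_n|+\log_-|P_n'|$, $\log_+|P_n(z)|\le n(\log2+\log_+|z|+O(1))$ by log-Ces\`aro boundedness, and $\log_-|P_n'(z)|\le\sum_{j=1}^{n-1}\log_-|z-\zeta_j|$ over the critical points $\zeta_j$, with $\int_K(\log_-|z-\zeta|)^2dA(z)$ bounded uniformly in $\zeta$. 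Thus the expectation is $O(n^{3/4})$, so dividing by $n$ and applying Markov gives (ii).

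The main obstacle is part (ii), i.e.\ ruling out excessive cancellation in $S_n(z)$ uniformly enough in $z$ to integrate over $K$; the mechanism is to combine the quantitative Littlewood--Offord bound at the single exponentially small scale $\delta_n$ --- whose admissibility is exactly what the hypothesis $\sum_i\log_+\tfrac1{|a_{n,i}-b_{n,i}|}=o(n^2)$ guarantees, once log-Ces\`aro boundedness is used to keep most of the $a_{n,i},b_{n,i}$ in a fixed ball --- with the crude but deterministic bound $\int_K(\log_-|S_n|)^2\,dA=O(n^2)$ to absorb the rare large values of $\log_-|S_n|$. The remaining ingredients (the a.s.\ convergence of $\mathscr{M}(P_n)$, the stability of log-Ces\`aro boundedness under the random choice, the tightness of $\mathscr{M}(P_n')$ via Gauss--Lucas, and the passage from $L^1_{\mathrm{loc}}$ convergence of potentials to weak convergence of their Laplacians) are as in the proof of Theorem \ref{thm2}.
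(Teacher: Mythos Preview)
Your overall plan---show $\tfrac1n\log|S_n|\to0$ in $L^1_{\mathrm{loc}}$ in probability via anti-concentration at scale $e^{-o(n)}$ combined with the deterministic estimate $\int_K(\log_-|S_n|)^2\,dA=O(n^2)$---is exactly the paper's; the paper packages the combination step as an appeal to the Tao--Vu lemma where you carry out the Cauchy--Schwarz by hand, and your layer-cake bound on $\int_K\log_+|S_n|$ is a little more direct than the paper's pointwise Borel--Cantelli argument.

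There is, however, one unjustified step. Your claim (i), that $p_n\to U_\mu$ in $L^1_{\mathrm{loc}}$ almost surely, does \emph{not} follow from weak convergence of $\mathscr{M}(P_n)$ together with log-Ces\'aro boundedness. For a counterexample within the hypotheses, set $a_{n,1}=e^n$, $b_{n,1}=0$, and for $i\ge2$ put $a_{n,i}=c_i+1/n$, $b_{n,i}=c_i$ with $\{c_i\}$ any $\mu$-distributed, log-Ces\'aro bounded sequence; then all assumptions of the theorem hold (in particular $\sum_i\log_+\tfrac1{|a_{n,i}-b_{n,i}|}=(n-1)\log n=o(n^2)$), yet on the event $\xi_{n,1}=e^n$ the term $\tfrac1n\log|z-e^n|\approx1$ shifts $p_n$ by a non-vanishing amount on any compact set, so $p_n$ cannot converge almost surely. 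Fortunately (i) is not needed: once (ii) is established, the distributional identity $\tfrac1{2\pi n}\Delta\log|S_n|=\tfrac{n-1}{n}\mathscr{M}(P_n')-\mathscr{M}(P_n)$, tested against $C_c^\infty$ functions and combined with the almost-sure weak convergence $\mathscr{M}(P_n)\to\mu$ that you do prove, already yields $\mathscr{M}(P_n')\to\mu$ in probability. This is precisely how the paper argues, so simply drop the $L^1_{\mathrm{loc}}$ claim on $p_n$ and your proof is complete.
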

\begin{example}\label{reeds}
In \cite{reeds}, the author studies the real zeros of the Cauchy location likelihood equation to estimate the location parameter of Cauchy random variables. Let $X_1, X_2, \dots, $ be i.i.d. Cauchy distributed random variables with the density $\frac{1}{\pi(1+x^2)}$. Notice that the zeros of the Cauchy location likelihood equation 
\begin{equation}\label{cauchyLE}
\sum\limits_{k=1}^{n}\frac{\partial}{\partial \theta}\log\frac{1}{\pi(1+(X_k-\theta)^2)}=0 ,
\end{equation}
are the critical points of the polynomial $P_n(z)=\prod_{k=1}^{n}(z-X_k+i)(z-X_k-i)$. Following the proof of the Theorem \ref{thm2} (tweaking Lemma \ref{kolmogorov-rogozin} appropriately), it can be shown that the limiting empirical measures of zeros and critical points agree for the sequence of the polynomials $\{P_n\}_{n\geq1}$. The limiting empirical measure of zeros $\mathscr{M}(P_n)$ is uniform mixture of Cauchy distribution supported on the lines $\Im(z)=\pm i$. As a consequence we get that the number of real zeros of Cauchy location likelihood equation is $\small o(n)$.
\end{example}

We return to the example of the sequence of polynomials whose $n$-th term is $P_n(z)=z^n-1$. By removing a zero from $P_n$, we can see that the empirical measures of zeros and critical points agree in limit. Define the sequence $\{Q_n\}_{n\geq1}$, where $Q_n(z)=\frac{P_{n+1}(z)}{z-1}.$ From the definition of $Q_n$, the limiting zero measure of the sequence $\{Q_n\}_{n\geq1}$ is the uniform probability measure on $S^1$. The derivative of these polynomials is \[Q_n'(z)=\frac{nz^{n+1}-(n+1)z^n+1}{(z-1)^2}.\] For any $|z|>1$, the polynomial $nz^{n+1}-(n+1)z^n+1$ does not vanish for any $n$ large enough as the term $nz^{n+1}$ dominates the rest of terms in absolute value. Similarly, for any $|z|<1$ for any $n$ sufficiently large enough, $1$ dominates the rest of the terms and hence the polynomials does not vanish . Therefore the limiting zero measure of the sequence $\{Q_n'\}_{n\geq1}$ is supported on $S^1$.


 To get the angular distribution of zeros of $Q_n'$ we use a bound of Erd\"{o}s-Tur\'{a}n for the discrepancy between a probability measure and uniform measure on $S^1$. We will sate the inequality in the case where the probability measure is the counting probability measure of zeros of a polynomial.

\begin{theorem}[Erd\"{o}s-Tur\'{a}n~\cite{erdos-turan}]
	Let $\{a_k\}_{0\leq k\leq N}$ be a sequence of complex numbers such that $a_0a_N\neq 0$ and let, $P(z)=\sum\limits_{k=0}^{N}a_kz^k.$ 
	Then, \[\bigg|\frac{1}{N}\nu_N(\theta,\phi)-\frac{\phi-\theta}{2\pi}\bigg|^2\leq\frac{C}{N}\log\bigg|\frac{\sum_{k=0}^{N}|a_k|}{\sqrt{|a_0a_N|}}\bigg|,\]
	for some constant $C$ and $\nu_N(\theta,\phi):=\#\{z_k:\theta\leq\arg(z_k)<\phi\}$, where $z_1,z_2,\dots,z_N$ are zeros of $P(z)$.
\end{theorem}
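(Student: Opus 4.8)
The plan is to reduce the arc-discrepancy of the angular distribution of the zeros of $P$ to a weighted sum of Weyl (power) sums, and then to control those sums through the Fourier coefficients of $\log|P(e^{i\theta})|$, which are themselves governed by the Mahler measure and hence by $\sum_k|a_k|$ and $|a_0a_N|$. Write $P(z)=a_N\prod_{j=1}^{N}(z-z_j)$ with $z_j=r_je^{i\phi_j}$ (each $z_j\neq0$ since $a_0\neq0$), set $L:=\log\frac{\sum_k|a_k|}{\sqrt{|a_0a_N|}}$, and let $D_N$ be the supremum over all arcs $[\alpha,\beta)$ of $\bigl|\tfrac1N\nu_N(\alpha,\beta)-\tfrac{\beta-\alpha}{2\pi}\bigr|$. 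Since the quantity to be bounded is at most $D_N^2$, it is enough to show $D_N^2\le CL/N$. I would first invoke the classical Erd\"{o}s--Tur\'{a}n--LeVeque inequality: for every integer $m\ge1$,
\[
D_N\le C\Bigl(\frac1m+\sum_{\nu=1}^{m}\frac1\nu\,\Bigl|\frac1N\sum_{j=1}^{N}e^{i\nu\phi_j}\Bigr|\Bigr).
\]
(If one wants a self-contained argument, this master inequality is proved by sandwiching the indicator of an arc between trigonometric polynomials of degree $m$ with $L^1$-error $O(1/m)$ and explicit Fourier coefficients, via a de la Vall\'{e}e Poussin or Selberg--Beurling kernel; that part is routine and not specific to polynomials.) So the real task becomes the Weyl-sum bound $\bigl|\tfrac1N\sum_j e^{i\nu\phi_j}\bigr|\le C\nu L/N$.

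To get this I would reflect each zero into the closed unit disc along its own ray, setting $w_j:=r_j'e^{i\phi_j}$ with $r_j':=\min(r_j,1/r_j)\le1$, so $\arg w_j=\phi_j$ and $-\log r_j'=|\log r_j|$. The key identity is the Fourier expansion $\log|e^{i\theta}-z|=\log^{+}|z|-\tfrac12\sum_{\nu\ge1}\tfrac1\nu\bigl(w^{\nu}e^{-i\nu\theta}+\bar w^{\nu}e^{i\nu\theta}\bigr)$, valid for every $z\neq0$ with $w$ its reflection. Summing over $j$ and reading off Fourier coefficients, $g(\theta):=\log|P(e^{i\theta})|$ has zeroth coefficient $\log M(P)$ (Jensen's formula, $M(P)$ the Mahler measure) and, for $\nu\ge1$, $\nu$-th coefficient $-\tfrac1{2\nu}\sum_j w_j^{\nu}$; hence $\bigl|\sum_j w_j^{\nu}\bigr|\le\frac{\nu}{\pi}\int_0^{2\pi}|g(\theta)|\,d\theta$. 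Using the scale-invariance of the inequality I would normalize $|a_0a_N|=1$ (so $\sum_k|a_k|\ge2$ and $L=\log\sum_k|a_k|$); then $g^{+}\le L$ pointwise (since $|P(e^{i\theta})|\le\sum_k|a_k|$), while applying Jensen to the reciprocal polynomial $z^{N}P(1/z)$ gives $M(P)\ge 1/\sum_k|a_k|$, so $\int_0^{2\pi}g=2\pi\log M(P)\ge-2\pi L$ and therefore $\int_0^{2\pi}|g|=2\int_0^{2\pi}g^{+}-\int_0^{2\pi}g\le 6\pi L$, whence $\bigl|\sum_j w_j^{\nu}\bigr|\le 6\nu L$. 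Finally $|w_j^{\nu}-e^{i\nu\phi_j}|=1-(r_j')^{\nu}\le\nu|\log r_j|$ and $\sum_j|\log r_j|=\log M(P)+\log M\!\bigl(z^{N}P(1/z)\bigr)\le 2L$, so $\bigl|\sum_j e^{i\nu\phi_j}\bigr|\le 6\nu L+2\nu L$ and the Weyl-sum bound follows.

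Substituting into the master inequality gives $D_N\le C\bigl(\tfrac1m+\tfrac{mL}{N}\bigr)$; choosing $m\asymp\sqrt{N/L}$ (the range $L\ge N$ being trivial, as $D_N\le1$ always) yields $D_N\le C\sqrt{L/N}$, i.e.\ $D_N^2\le\frac{C}{N}\log\frac{\sum_k|a_k|}{\sqrt{|a_0a_N|}}$, which implies the stated inequality. I expect the main obstacle to be not any single estimate but the analytic bookkeeping around the Fourier step: justifying the termwise integration of the series for $\log|e^{i\theta}-z_j|$ --- especially when some $|z_j|=1$, where $g$ has an integrable logarithmic singularity and one invokes the classical Fourier series of $\log|2\sin(\tfrac\theta2)|$ --- and, more essentially, bounding $\int|g|$ rather than $\int g$, i.e.\ controlling the negative part of $\log|P|$ near its zeros via Jensen's formula and the reciprocal polynomial.
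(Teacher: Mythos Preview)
The paper does not prove this theorem; it merely quotes it as the classical Erd\H{o}s--Tur\'{a}n inequality with a citation and then applies it to $(z-1)^2Q_n'(z)$. So there is no proof in the paper to compare against.

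That said, your argument is a correct and essentially standard proof of the result. The reduction to Weyl sums via the Erd\H{o}s--Tur\'{a}n--LeVeque (or Selberg--Beurling) master inequality, followed by reading those sums off as Fourier coefficients of $\log|P(e^{i\theta})|$ after reflecting zeros into the closed unit disc, and then bounding $\int_0^{2\pi}|\log|P(e^{i\theta})||\,d\theta$ via Jensen's formula and the trivial upper bound $|P(e^{i\theta})|\le\sum_k|a_k|$, is exactly the classical route. A couple of minor sharpenings are available but unnecessary: under your normalization $|a_0a_N|=1$ one has $M(P)\ge\max(|a_0|,|a_N|)$ directly (since $M(P)=|a_N|\prod_j\max(|z_j|,1)\ge|a_N|\prod_j|z_j|=|a_0|$ and similarly $M(P)\ge|a_N|$), so $M(P)\ge1$ and $\int g\ge0$, giving $\int|g|\le4\pi L$ rather than $6\pi L$; and your identity $\sum_j|\log r_j|=2\log M(P)$ can be obtained without mentioning the reciprocal polynomial, simply from $M(P)=|a_N|\prod_j\max(|z_j|,1)$ and $|a_0|=|a_N|\prod_j|z_j|$. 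The point you flag as the main obstacle---termwise integration when some $|z_j|=1$---is indeed handled by the $L^1$ Fourier series of $\log|2\sin(\theta/2)|$, and the optimization $m\asymp\sqrt{N/L}$ (with the trivial bound $D_N\le1$ when $L\ge N$) closes the argument.
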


Applying the above inequality for the polynomial $(z-1)^2Q_n'(z)$,  we get the limiting zero measure of $Q_n'$ is uniform probability measure on $S^1$ which agrees with the limiting zero measure of $Q_n$. As an application of the forthcoming Theorem \ref{thm3}, we will see that if we choose random subsequence from a $\mu$-distributed sequence, then the limiting  distribution of zeros and critical points agree for the polynomials made from this random sequence.

The next result deals with counting the zeros and pole of a random rational function. The random rational function is defined as $L_n(z)=\sum_{k=1}^{n}\frac{a_k}{z-z_k}$. In a special case where $\sum_{k=1}^{n}a_k=n$ and $a_k>0$ for every $k=1,2,\dots,n$, it is called generalized Sz.-Nagy derivative. For a classical derivative all $a_k$s are equal to $1$. It is mentioned in \cite{rahmanbook} that the motivation in studying generalized derivative is that many of the results for classical derivatives extend to the generalized derivatives. In the case where the poles of the rational function are real and the weights are chosen from Dirichlet distribution the exatct density of zeros of the rational function can be exactly solved and is referred as Dixon-Anderson density (see Proposition 4.2.1 in \cite{forrester}).
\begin{theorem}\label{thm3}
	Let $a_1, a_2, \dots$ be i.i.d. random variables satisfying $\ee{\text{$|a_1|$}}\text{$< \infty$}$.  Let $\{z_n\}_{n\geq 1}$ be a sequence satisfying the condition that for Lebesgue a.e. $z \in \mathbb{C}$ there exists a compact set $K_z$ with $d(z,K_z)>0$ such that there are infinitely many $z_k$'s in $K_z$, and there is a point $\omega$ that is not a limit point of $z_n$'s. Define $L_n(z):=  \frac{a_1}{z-z_1}+\frac{a_2	}{z-z_2}\dots+\frac{a_n}{z-z_n} $. Then $\frac{1}{n}\Delta \log(|L_n(z)|)\rightarrow 0$ in probability, in the sense of distributions.
\end{theorem}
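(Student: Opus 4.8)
The plan is to reduce the distributional statement to an $L^1_{\mathrm{loc}}$ estimate on the potential $\tfrac1n\log|L_n|$. Since for $\varphi\in C_c^\infty(\mathbb C)$ one has $\big\langle\tfrac1n\Delta\log|L_n|,\varphi\big\rangle=\tfrac1n\int_{\mathbb C}\log|L_n(z)|\,\Delta\varphi(z)\,dm(z)$ with $\Delta\varphi$ bounded and compactly supported, it suffices to prove that for every compact $K\subset\mathbb C$
\[
\frac1n\int_K\big|\log|L_n(z)|\big|\,dm(z)\;\xrightarrow{\ \mathbb P\ }\;0 .
\]
I would split $\big|\log|L_n|\big|=\log^+|L_n|+\log^-|L_n|$ and treat the upper tail and the lower tail of $\log|L_n|$ separately; the upper tail is soft, the lower tail is the heart of the matter.

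\textbf{The $\log^+$ part.} Here I would use the triangle inequality $|L_n(z)|\le\sum_{k=1}^n\frac{|a_k|}{|z-z_k|}$ together with $(\sum x_k)^s\le\sum x_k^s$ for $0<s<1$ to get $\mathbb E\,|L_n(z)|^s\le\mathbb E|a_1|^s\sum_{k\le n}|z-z_k|^{-s}$; integrating over $z\in K$ and using that $|w|^{-s}$ is locally integrable on $\mathbb R^2$, so that $\int_K|z-w|^{-s}\,dm(z)\le C(K,s)$ uniformly in $w$, yields $\int_K\mathbb E|L_n(z)|^s\,dm(z)\le C(K,s)\,\mathbb E|a_1|^s\,n$. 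Feeding this into the elementary inequality $\log^+x\le\varepsilon n+\varepsilon n\,e^{-s\varepsilon n}x^s$ (valid for all $x>0$ once $\varepsilon n>1/s$) and taking expectations gives $\mathbb E\big[\tfrac1n\int_K\log^+|L_n|\,dm\big]\le\varepsilon\,m(K)+o(1)$; letting $\varepsilon\downarrow0$ settles the $\log^+$ part, in fact in $L^1(\mathbb P)$.

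\textbf{The $\log^-$ part (the crux).} Here one must rule out $|L_n(z)|$ being exponentially small on a non‑negligible set, and the input is anti‑concentration. For Lebesgue‑a.e.\ $z$ the hypothesis on $\{z_n\}$ provides a compact $K_z$ with $\rho_z:=d(z,K_z)>0$ such that $I_n(z):=\{k\le n:z_k\in K_z\}$ has $\#I_n(z)\to\infty$; for $k\in I_n(z)$ the coefficient $\tfrac1{z-z_k}$ has modulus confined to the compact interval $[\,1/R_z,\,1/\rho_z\,]$ with $R_z:=\sup_{w\in K_z}|z-w|$. Writing $L_n(z)=\sum_{k\in I_n(z)}\frac{a_k}{z-z_k}+(\text{rest})$, conditioning on the weights outside $I_n(z)$ and applying the Kolmogorov--Rogozin inequality to the conditionally independent sum, I would obtain
\[
\sup_{w\in\mathbb C}\mathbb P\big(|L_n(z)-w|\le\lambda\big)\;\le\;\frac{C}{\sqrt{\#I_n(z)\,\big(1-Q(a_1,\lambda R_z)\big)}},
\]
where $Q(\cdot,\cdot)$ is the Lévy concentration function; since $a_1$ is not a.s.\ constant, $Q(a_1,\delta)<1$ for $\delta$ below some $\delta_0>0$, so for all $\lambda\le\delta_0/R_z$ the right‑hand side is at most $C_z/\sqrt{\#I_n(z)}$, and in particular $\mathbb P(|L_n(z)|<e^{-\varepsilon n})\le C_z/\sqrt{\#I_n(z)}\to0$. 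Next I would truncate: $\log^-|L_n(z)|\le\varepsilon n+\log^-|L_n(z)|\,\mathbbm 1_{\{|L_n(z)|<e^{-\varepsilon n}\}}$, so it remains to show $R_n:=\tfrac1n\int_K\log^-|L_n(z)|\,\mathbbm 1_{\{|L_n(z)|<e^{-\varepsilon n}\}}\,dm(z)\to0$ in probability, which I would do by estimating $\mathbb E[R_n]$ via Fubini, combining the pointwise anti‑concentration bound above with the deterministic fact that, for each realisation, the sub‑level set $\{z\in K:|L_n(z)|<\delta\}$ shrinks in area as $\delta\downarrow0$ because $L_n$ has only $\le n-1$ zeros. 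The point $\omega$ that is not a limit point of $\{z_n\}$ enters here as a reference location, near which $L_n$ has no pole and $\tfrac1n\log|L_n(\omega)|\to0$, which pins the normalisation of $\log|L_n|$ away from the zeros of $L_n$.

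\textbf{The main obstacle.} The difficulty is entirely concentrated in the $\log^-$ part, and specifically in passing from the pointwise‑in‑probability statement $\tfrac1n\log^-|L_n(z)|\to0$ to the integrated one: there is no moment bound on $\log^-|L_n(z)|$ to lean on (its expectation can even be infinite if $a_1$ has an atom), the cardinalities $\#I_n(z)$ tend to infinity with no rate uniform in $z$, and $\{z_n\}$ is not assumed log‑Ces\'{a}ro bounded, so one cannot clear the denominator $\prod_{k\le n}(z-z_k)$ and reduce matters to a polynomial. Hence this upgrade must be carried out by an exponential‑scale truncation together with a Fubini estimate that carefully exploits both the randomness (Kolmogorov--Rogozin) and the geometry of the zero set of $L_n$; it is precisely at this step that the genuinely random nature of the weights $a_k$ — through $Q(a_1,\cdot)<1$ at small scales — is indispensable.
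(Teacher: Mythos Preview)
Your reduction and the $\log^+$ argument are sound; your pointwise anti-concentration for $\log^-|L_n(z)|$ via Kolmogorov--Rogozin is exactly the paper's Lemma~\ref{krthm3}. The gap is precisely where you locate it, and your sketch does not close it. To bound $\mathbb E[R_n]$ by Fubini you would need control on $\mathbb E\big[\log^-|L_n(z)|\,\mathbbm 1_{\{|L_n(z)|<e^{-\varepsilon n}\}}\big]$, but Kolmogorov--Rogozin only says the event has small probability, not that $\log^-|L_n(z)|$ is tame on it (this expectation is in fact infinite whenever $a_1$ has an atom). The ``at most $n-1$ zeros'' observation does not help either: writing $L_n=Q_n/P_n$, the local integrability of $\log^-|z-\alpha|$ handles the location of the zeros of $Q_n$, but the global size of $P_n$ and the leading coefficient $a_1+\dots+a_n$ of $Q_n$ are exactly what you cannot control without a log-Ces\'aro bound on $\{z_n\}$, which is not assumed. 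Your remark that $\omega$ ``pins the normalisation'' is the right intuition but is not yet an argument.

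The paper closes the gap by a different device. Instead of attacking $\tfrac1n\int_K|\log|L_n||$ directly, it establishes the $L^2$ tightness statement (A3), that $\big\{\tfrac1{n^2}\int_{\mathbb D_r}\log^2|L_n|\,dm\big\}_{n\ge1}$ is tight, and then invokes a Tao--Vu lemma (Lemma~\ref{lem:tao_vu}): pointwise convergence in probability together with $L^{1+\delta}$ tightness forces convergence of the integral in probability. The tightness is obtained via the Poisson--Jensen formula on a disc $\mathbb D_R\supset\mathbb D_r$: the boundary integral $\mathcal I_n(z,R)$ is bounded below uniformly for $z\in\mathbb D_r$ (Lemmas~\ref{tighttwo} and~\ref{tight3}) by first evaluating Poisson--Jensen at the special point $\omega$ --- this is exactly where the hypothesis that $\omega$ is not a limit point of $\{z_n\}$ is used, so that the pole contribution $\tfrac1n\sum_{|z_k|<R}\log|z_k/R|$ at $\omega$ stays bounded below --- and then transferring to general $z$ using the a.s.\ upper bound on $\sup_{|z|=R}|L_n|$ from Remark~\ref{one}; the remaining zero/pole correction terms in Poisson--Jensen are handled by the local $L^2$ integrability of $\log$. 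This Poisson--Jensen representation is the missing idea that converts your heuristic about $\omega$ into a proof and substitutes for the absent log-Ces\'aro hypothesis.
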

In the statement of the above theorem, there is a mention of the sequence $\{z_n\}_{n\geq1}$ satisfying that for Lebesgue a.e. $z \in \mathbb{C}$ there exists a compact set $K_z$ with $d(z,K_z)>0$ such that there are infinitely many $z_k$'s in $K_z$, and there is a point $\omega$ that is not a limit point of $z_n$'s. Several classes of sequences satisfy this condition. For example any bounded sequence or any sequence that is not dense and $\mu$-distributed for appropriate $\mu$ satisfies this condition. 

\begin{remark}
	In Theorem \ref{thm3}, let $L_n(z)=\frac{Q_n(z)}{P_n(z)}$. Where $Q_n(z)$ id defined to be the generalized derivative of the polynomial $P_n$. Then Theorem \ref{thm3} asserts that $\frac{1}{n}\Delta\log|L_n(z)|\rightarrow0$, which in turn imply that $\text{$\mathscr{M}$}(Q_n)-\text{$\mathscr{M}$}(P_n)\rightarrow 0$ in the sense of distributions. If we assume that the sequence $\{z_k\}_{k\geq1}$ is $\mu$-distributed then it follows that the limiting measure of critical points converge to $\mu$.
\end{remark}

As an application of Theorem \ref{thm3}  we choose a random subsequence of a $\mu$-distributed sequence and show that the limiting empirical measures of zeros and critical points agree. We state this result as the following corollary.

\begin{corollary}\label{corollary:thm3}
	Let $\{z_n\}_{n\geq1}$ be a $\mu$-distributed sequence that is not dense in $\mathbb{C}$, for a $\mu$ which is not supported on the whole complex plane. Choose a subsequence $\{z_{n_k}\}_{k \geq 1}$ at random that is, each of $z_n$ is part of subsequence with probability $p<1$ independent of others. Define the polynomials $P_k(z):=(z-z_{n_1})(z-z_{n_2})\dots(z-z_{n_k})$.  Then,  $\text{$\mathscr{M}$}(P_k)\xrightarrow{w}\mu$ almost surely and  $\text{$\mathscr{M}$}(P_k')\xrightarrow{w}\mu$  in probability.
\end{corollary}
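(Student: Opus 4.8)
The plan is to derive this as an application of Theorem \ref{thm3}, exactly as the preceding remark suggests. First I would set up the rational function picture: write $L_k(z) = \frac{P_k'(z)}{P_k(z)} = \sum_{j=1}^{k} \frac{1}{z - z_{n_j}}$, so that the zeros of $P_k'$ (away from the $z_{n_j}$) are precisely the zeros of $L_k$, and $\text{$\mathscr{M}$}(P_k') - \text{$\mathscr{M}$}(P_k) = \frac{1}{k}\Delta \log|P_k'| - \frac{1}{k}\Delta\log|P_k| = \frac{1}{k}\Delta\log|L_k(z)|$ in the sense of distributions. Thus it suffices to (i) verify $\text{$\mathscr{M}$}(P_k) \xrightarrow{w} \mu$ almost surely, and (ii) verify the hypotheses of Theorem \ref{thm3} for the random subsequence $\{z_{n_j}\}_{j\geq1}$, which then gives $\frac{1}{k}\Delta\log|L_k| \to 0$ in probability, hence $\text{$\mathscr{M}$}(P_k') - \text{$\mathscr{M}$}(P_k) \to 0$ in probability in the sense of distributions, and combining with (i) yields $\text{$\mathscr{M}$}(P_k') \xrightarrow{w} \mu$ in probability.

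For step (i), the empirical measure $\frac{1}{k}\sum_{j=1}^k \delta_{z_{n_j}}$ is obtained by keeping each $z_n$ with probability $p$ independently. I would argue that for any fixed $f \in C_c^\infty(\mathbb{C})$, the sums $\sum_{n \le N} \mathbbm{1}_{\{n \text{ kept}\}} f(z_n)$ concentrate around $p\sum_{n\le N} f(z_n)$ — a law-of-large-numbers statement for independent (non-identically-distributed, but bounded) summands, e.g. via a fourth-moment or Borel–Cantelli argument along the subsequence where roughly $pN$ points have been kept. Since $\frac{1}{N}\sum_{n \le N} f(z_n) \to \int f\,d\mu$ by $\mu$-distributedness, and since the number of kept points among the first $N$ is $pN(1+o(1))$ a.s., a reindexing argument gives $\text{$\mathscr{M}$}(P_k) \xrightarrow{w}\mu$ a.s.; one reduces to a countable dense (in the sup-norm sense, using a countable convergence-determining family) collection of test functions to handle the "for all $f$" quantifier.

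For step (ii), I must check the geometric hypotheses of Theorem \ref{thm3} hold for the realized subsequence. Since $\{z_n\}$ is $\mu$-distributed with $\mu$ not supported on all of $\mathbb{C}$, there is an open set $U$ with $\mu(U) = 0$; pick $\omega \in U$ with a positive-distance ball around it disjoint from the closed support of $\mu$, and observe that only finitely many $z_n$ can lie in that ball (else $\mu$ would charge it), so a fortiori only finitely many $z_{n_j}$ do — giving the non-limit-point $\omega$. For the "infinitely many points in a compact set of positive distance" condition: for Lebesgue-a.e. $z$, since $\{z_n\}$ is not dense, one can find a ball $B(z_0,r)$ disjoint from a neighborhood of $z$ with $\mu(B(z_0,r)) > 0$, hence infinitely many $z_n \in \overline{B(z_0,r)}$; now almost surely infinitely many of these get kept (independent events each of probability $p$, Borel–Cantelli second lemma), so infinitely many $z_{n_j}$ lie in that compact set. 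This holds simultaneously for a suitable countable family of balls generating the a.e.-$z$ statement. Then Theorem \ref{thm3} applies to $L_k$ and delivers the conclusion.

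I expect the main obstacle to be the bookkeeping in step (ii): making the "for Lebesgue-a.e. $z$" clause survive the randomization. The cleanest route is to fix in advance a countable collection of rational balls $\{B_m\}$ with $\mu(B_m)>0$ whose complements cover Lebesgue-a.e. point at positive distance (possible precisely because $\{z_n\}$ is not dense and $\mu$ is not fully supported), apply the second Borel–Cantelli lemma to each $B_m$ to get, almost surely, infinitely many kept points in each $\overline{B_m}$, and then on that almost-sure event check the hypothesis of Theorem \ref{thm3} pointwise in $z$. A secondary, more routine obstacle is the reindexing between "first $k$ kept points" and "first $N$ points of the original sequence" in step (i), which is handled by the a.s. asymptotics $n_k = k/p + o(k)$.
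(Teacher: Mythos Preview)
Your plan has a real gap in step~(ii). You propose to apply Theorem~\ref{thm3} to the realised subsequence $\{z_{n_j}\}_{j\ge 1}$, writing $L_k(z)=\sum_{j=1}^k\frac{1}{z-z_{n_j}}$; implicitly this means taking the weights $a_j\equiv 1$. But Theorem~\ref{thm3} is a statement about a \emph{deterministic} sequence of poles paired with \emph{non-degenerate} i.i.d.\ random weights: the lower-bound step (Lemma~\ref{krthm3}) feeds the non-degeneracy of the $a_j$'s into the Kolmogorov--Rogozin inequality, and that bound is vacuous when $a_j\equiv 1$. Once you condition on the realised subsequence there is no randomness left, and the conclusion can genuinely fail --- the paper's own example of a $\mu$-distributed sequence on $S^1$ with $\mathscr{M}(P_{2^n}')=\delta_0$ satisfies every geometric hypothesis of Theorem~\ref{thm3}, yet $\tfrac{1}{n}\Delta\log|L_n|\not\to 0$ along that subsequence. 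So verifying the geometric conditions for the thinned sequence, however carefully (your countable-balls plus second Borel--Cantelli argument is correct in itself), does not license the invocation of Theorem~\ref{thm3}.

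The paper's proof uses Theorem~\ref{thm3} differently: it keeps the \emph{original} deterministic sequence $\{z_n\}$ as the poles and lets the i.i.d.\ Bernoulli$(p)$ selection indicators themselves play the role of the random weights $a_n$. Then $L_N(z)=\sum_{n\le N}\frac{a_n}{z-z_n}$ coincides with $P_m'(z)/P_m(z)$ whenever $N=n_m$, and the geometric hypotheses of Theorem~\ref{thm3} need only be checked for the fixed sequence $\{z_n\}$, which is immediate from the standing assumptions (not dense, $\mu$ not fully supported). Theorem~\ref{thm3} gives $\tfrac{1}{N}\Delta\log|L_N|\to 0$ in probability; passing to the subsequence $N=n_m$ and rescaling via the a.s.\ asymptotics of $n_m/m$ yields $\tfrac{1}{m}\Delta\log|P_m'/P_m|\to 0$ in probability, which together with your step~(i) finishes the proof. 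In short, the randomness that drives the anti-concentration must come from the weights in Theorem~\ref{thm3}, and the selection indicators are exactly the right object to use for that.
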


We believe a strengthened version of above Corollary \ref{corollary:thm3} is true. We state it as the following conjecture.

\begin{conjecture}
Let $\{z_n\}_{n\geq1}$ be a $\mu$-distributed and log-C\'{e}saro bounded sequence. Define the sequence of random polynomials to be $P_n(z)=\frac{(z-z_1)(z-z_2)\dots(z-z_{n+1})}{z-z_{s_n}}$, where $s_n$ is a random number distributed uniformly on the set $\{1, 2, \dots, n+1\}$.  Then,  $\text{$\mathscr{M}$}(P_k)\xrightarrow{w}\mu$ almost surely and  $\text{$\mathscr{M}$}(P_k')\xrightarrow{w}\mu$  in probability.
\end{conjecture}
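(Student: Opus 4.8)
\medskip
\noindent\textit{Proof proposal.} I would run the logarithmic-potential scheme used for Theorem~\ref{thm2}, inserting one structural identity to compensate for the fact that this model carries only about $\log_2 n$ bits of randomness in degree $n$. Write $R_n(z):=\prod_{j=1}^{n+1}(z-z_j)$, $S_n(z):=R_n'(z)/R_n(z)=\sum_{j=1}^{n+1}\tfrac1{z-z_j}$, and $U^\mu(z):=\int_{\mathbb C}\log|z-w|\,d\mu(w)$. The almost-sure part is essentially automatic: $\mathscr M(P_n)$ differs from $\tfrac1{n+1}\sum_{j=1}^{n+1}\delta_{z_j}$ by a signed measure of total variation $\le 2/n$, so it has the same vague limit, namely the probability measure $\mu$, for \emph{every} realisation; log-Ces\`aro boundedness then supplies the equi-integrability on compacts that turns this into $\tfrac1n\log|P_n(z)|\to U^\mu(z)$ in $L^1_{\mathrm{loc}}(\mathbb C)$, exactly as in Theorem~\ref{thm2}. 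Since $\mathscr M(P_n')-\mathscr M(P_n)=\tfrac1{2\pi n}\Delta\log|P_n'/P_n|+o(1)$ in the sense of distributions, everything reduces to
\[
h_n(z):=\tfrac1n\log\bigl|L_n(z)\bigr|\longrightarrow 0\quad\text{in }\mathcal D'(\mathbb C)\text{, in probability,}\qquad L_n:=P_n'/P_n .
\]

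The identity that does the work is $L_n(z)=\sum_{j\ne s_n}\tfrac1{z-z_j}=S_n(z)-\tfrac1{z-z_{s_n}}$: the random function $L_n$ is the \emph{deterministic} function $S_n$ with one of its $n+1$ simple poles, chosen uniformly at random, deleted. A Mahler-measure/Jensen estimate (again using log-Ces\`aro, as in Theorem~\ref{thm2}) shows that $\{\tfrac1n\log|P_n'|\}$, hence $\{h_n\}$, is bounded in $L^2_{\mathrm{loc}}$, so it suffices to prove $h_n(z)\to0$ in probability for Lebesgue-a.e.\ fixed $z$ and then promote this by a Cauchy--Schwarz / equi-integrability argument on compacts. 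For the \emph{upper} bound one combines $\log_+(a+b)\le\log2+\log_+a+\log_+b$, the crude estimate $|S_n(z)|\le(n+1)/\operatorname{dist}(z,\{z_j\}_{j\le n+1})$, and the fact --- valid for a.e.\ $z$ since $\operatorname{Leb}\{z:\operatorname{dist}(z,\{z_j\}_{j\le n+1})<e^{-\delta n}\}$ is summable in $n$ for every $\delta>0$ --- that $\operatorname{dist}(z,\{z_j\}_{j\le n+1})\ge e^{-\delta n}$ eventually, together with $\ee{\tfrac1n\log_+\tfrac1{|z-z_{s_n}|}}=\tfrac1{n(n+1)}\sum_{j\le n+1}\log_+\tfrac1{|z-z_j|}\to0$ for a.e.\ $z$; this gives $\limsup_n h_n(z)\le0$ almost surely.

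The content is the \emph{lower} bound: for a.e.\ $z$ and every $\varepsilon>0$, $\mathbb P(h_n(z)<-\varepsilon)\to0$. Fix $\delta\in(0,\varepsilon/4)$ and restrict to the a.e.\ event $\operatorname{dist}(z,\{z_j\})\ge e^{-\delta n}$, $|S_n(z)|\le e^{\delta n}$. If $|S_n(z)|\le e^{-2\delta n}$, then $|L_n(z)|<e^{-\varepsilon n}$ forces $z_{s_n}$ to lie exponentially far from $z$, which by log-Ces\`aro ($\#\{j\le n:|z_j|>e^{cn}\}=O(1/c)$) has probability $O(1/n)$, while on the complement $|L_n(z)|\ge|z-z_{s_n}|^{-1}-|S_n(z)|$ is not exponentially small; so this regime is harmless. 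If $e^{-2\delta n}<|S_n(z)|\le e^{\delta n}$, the disc $D(S_n(z),e^{-\varepsilon n})$ stays away from $0$ and hence inverts into a disc of radius $\le e^{-(\varepsilon-4\delta)n}$ about $1/S_n(z)$, whence, with $\beta_n(z):=z-1/S_n(z)$ and $\varepsilon':=\varepsilon-4\delta>0$,
\[
\mathbb P\bigl(h_n(z)<-\varepsilon\bigr)\ \le\ \frac{1}{n+1}\,\#\bigl\{\,j\le n+1:\ z_j\in D\bigl(\beta_n(z),\,e^{-\varepsilon'n}\bigr)\bigr\}.
\]
Thus the conjecture is reduced to the \emph{deterministic} claim that, for Lebesgue-a.e.\ $z$, this count is $o(n)$.

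This last claim is, I believe, the genuine obstacle, and the reason the statement is only conjectured. In Theorems~\ref{thm2} and~\ref{thm-tri} the $n$ independent coin flips permit a Kolmogorov--Rogozin-type anti-concentration estimate (the tweaked Lemma~\ref{kolmogorov-rogozin}); a single uniform index among $n+1$ supports nothing of the kind, so the bound must instead be read off the geometry of $\{z_n\}$ alone. Heuristically it should hold: the $k_n$ points in $D(\beta_n(z),e^{-\varepsilon'n})$ all satisfy $\tfrac1{z-z_j}\approx\tfrac1{z-\beta_n(z)}=S_n(z)$, so they contribute $\approx k_nS_n(z)$ to the very sum $S_n(z)$, and $k_n\gg1$ then forces an almost exact cancellation of order $k_n|S_n(z)|$ among the remaining $n+1-k_n$ terms --- a coincidence that an exact identity can sustain only on a Lebesgue-null set of $z$ (this is already visible for $z^N-1$, where deleting a single $N$-th root of unity moves the critical-point measure from $\delta_0$ to the uniform measure on $S^1$ precisely because the one surviving pole of $L_n$ keeps $L_n$ from being exponentially small off $S^1$). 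Making this uniform over the clustering structure of $\{z_n\}$ at the $n$-dependent scale $e^{-\varepsilon'n}$, with the centre $\beta_n(z)$ drifting in $n$, is what I do not see how to do in general. It can, however, be bypassed under mild regularity: if $\{z_n\}$ lies in a fixed compact set, or more generally if $\sup_{\zeta\in\mathbb C}\#\{j\le n:z_j\in D(\zeta,r)\}=o(n)$ uniformly as $r\downarrow0$ (which holds, e.g., when $\mu$ has a bounded density and $\{z_n\}$ is equidistributed at every scale), the displayed count is $o(n)$ at once, the lower bound closes, and $\mathscr M(P_n')\xrightarrow{w}\mu$ in probability follows just as in Corollary~\ref{corollary:thm3}.
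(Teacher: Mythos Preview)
The paper states this result as an open \emph{conjecture} and offers no proof; there is nothing in the paper to compare your argument against. You have correctly recognised this, and your write-up is not a proof but a reduction together with an honest identification of the missing step.

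Your reduction is sound and isolates the difficulty cleanly. The almost-sure convergence of $\mathscr{M}(P_n)$ is indeed trivial (a single deleted atom cannot move the empirical measure in the limit), and the logarithmic-potential scheme from Theorem~\ref{thm2} does reduce $\mathscr{M}(P_n')\xrightarrow{w}\mu$ to $\frac1n\log|L_n(z)|\to0$ in probability for a.e.\ $z$, with $L^2_{\mathrm{loc}}$ tightness coming from log-Ces\`aro boundedness exactly as in Lemma~\ref{tight}. Your upper bound (\ref{A1}) is the same Borel--Cantelli argument as Lemma~\ref{momentbound}, and your case split for the lower bound (\ref{A2}) --- exploiting the structural identity $L_n(z)=S_n(z)-\frac{1}{z-z_{s_n}}$ and inverting the small disc around $S_n(z)$ --- correctly reduces the problem to the deterministic anti-clustering statement
\[
\frac{1}{n+1}\,\#\Bigl\{\,j\le n+1:\ z_j\in D\bigl(\beta_n(z),\,e^{-\varepsilon'n}\bigr)\Bigr\}\longrightarrow 0\quad\text{for Lebesgue-a.e.\ }z,
\]
with $\beta_n(z)=z-1/S_n(z)$. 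You are right that this is precisely where Kolmogorov--Rogozin (Lemma~\ref{kolmogorov-rogozin}) is unavailable: a single uniform index supplies only $\log n$ bits of randomness, so anti-concentration must come from the geometry of the deterministic sequence rather than from independence. The paper does not claim to know how to close this gap either; your partial results under additional regularity (bounded support, or uniform control on clustering at shrinking scales) are genuine and go somewhat beyond what the paper asserts.
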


\section{Proofs of Corollaries \ref{Symmetric perturbations}, \ref{corollary4:kabluchko} and \ref{corollary:thm3}.} \label{sec3}
In Corollary \ref{Symmetric perturbations} we deal with perturbations of a $\mu$-distributed sequence. We expect that the perturbed sequence will also have the same limiting probability measure as of the original sequence. It is formally stated and proved in the following lemma.

\begin{lemma}\label{lemma:perturb_limit_measure}
	Let $\{a_n\}_{n\geq1}$ be a $\mu$-distributed sequence, $\sigma_n\downarrow0$ and $X_1,X_2,\dots$ are i.i.d. random variables. Then, $\{a_n+\sigma_nX_n\}_{n\geq1}$ is a $\mu$-distributed sequence almost surely.
\end{lemma}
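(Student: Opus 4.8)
The plan is to reduce the claim to a strong law of large numbers for Bernoulli variables with vanishing success probabilities. By definition of $\mu$-distributedness, what must be shown is that $\frac1n\sum_{i=1}^{n}\delta_{a_i+\sigma_i X_i}\xrightarrow{w}\mu$ almost surely, and since weak convergence here is tested against $C_c^\infty(\mathbb{C})$, I would first fix a single $f\in C_c^\infty(\mathbb{C})$, with $\|f\|_\infty\le M$ and Lipschitz constant $L$, and prove the scalar statement $\frac1n\sum_{i=1}^{n}f(a_i+\sigma_i X_i)\to\int f\,d\mu$ almost surely. Because $\{a_n\}_{n\geq1}$ is $\mu$-distributed and $f$ is bounded and continuous, $\frac1n\sum_{i=1}^{n}f(a_i)\to\int f\,d\mu$; so it suffices to show $\frac1n\sum_{i=1}^{n}D_i\to0$ almost surely, where $D_i:=f(a_i+\sigma_i X_i)-f(a_i)$.

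The key estimate exploits that $f$ is simultaneously bounded and Lipschitz, which is exactly what circumvents the absence of a moment assumption on $X_i$: for any $\varepsilon>0$, using $|D_i|\le\min\{2M,\,L\sigma_i|X_i|\}$ and splitting according to the event $\{\sigma_i|X_i|\le\varepsilon\}$ gives $|D_i|\le L\varepsilon+2M\,\mathbbm{1}\{|X_i|>\varepsilon/\sigma_i\}$. Put $Y_i:=\mathbbm{1}\{|X_i|>\varepsilon/\sigma_i\}$. The $Y_i$ are independent, take values in $[0,1]$, and $p_i:=\ee{Y_i}=\mathbb{P}(|X_1|>\varepsilon/\sigma_i)\to0$ since $\sigma_i\downarrow0$ and $|X_1|<\infty$ almost surely. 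Kolmogorov's strong law for independent, uniformly bounded summands ($\sum_i\var{Y_i}/i^2<\infty$ since $\var{Y_i}\le\tfrac14$), together with the Ces\'{a}ro convergence $\frac1n\sum_{i=1}^{n}p_i\to0$, yields $\frac1n\sum_{i=1}^{n}Y_i\to0$ almost surely. Hence $\limsup_n\frac1n\sum_{i=1}^{n}|D_i|\le L\varepsilon$ almost surely; intersecting the almost-sure events over $\varepsilon=1/m$, $m\in\mathbb{N}$, gives $\frac1n\sum_{i=1}^{n}D_i\to0$ almost surely, which completes the scalar statement.

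To obtain the uniform-in-$f$ conclusion, i.e. that a single exceptional null set works for all test functions at once, I would fix once and for all a countable family $\mathcal{D}\subseteq C_c^\infty(\mathbb{C})$ that is dense in $C_0(\mathbb{C})$ in the uniform norm (such a family exists, $C_0(\mathbb{C})$ being a separable Banach space in which $C_c^\infty(\mathbb{C})$ is dense), apply the previous two paragraphs to each $f\in\mathcal{D}$, and take the countable intersection $\Omega_0$ of the resulting almost-sure events. On $\Omega_0$ one has $\frac1n\sum_{i=1}^{n}f(a_i+\sigma_i X_i)\to\int f\,d\mu$ for every $f\in\mathcal{D}$; and for a general $g\in C_c^\infty(\mathbb{C})$, choosing $f\in\mathcal{D}$ with $\|g-f\|_\infty<\delta$ and using that all the measures involved are probability measures gives $\limsup_n\big|\frac1n\sum_{i=1}^{n}g(a_i+\sigma_i X_i)-\int g\,d\mu\big|\le2\delta$, and letting $\delta\to0$ finishes it.

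The only genuine obstacle is the lack of an integrability (or even tail) hypothesis on the $X_i$, which rules out applying a law of large numbers to $f(a_i+\sigma_i X_i)$ directly; the truncation that turns the increments into a Bernoulli strong law with $p_i\to0$ is what resolves this. The remaining ingredients — the Ces\'{a}ro step and the passage to a countable determining family — are routine bookkeeping.
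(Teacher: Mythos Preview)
Your proof is correct and takes essentially the same route as the paper: bound $|D_i|$ via boundedness and the Lipschitz property of $f$, split according to the size of the perturbation, and invoke a strong law for the resulting indicators. The only differences are cosmetic --- you truncate at the $i$-dependent level $\{|X_i|>\varepsilon/\sigma_i\}$ (independent Bernoullis with $p_i\to0$) where the paper truncates at a fixed level $\{|X_k|>M\}$ (i.i.d.\ Bernoullis with mean $<\varepsilon$, then Ces\`aro-averages the $\sigma_k$), and you are more explicit than the paper about the countable intersections over $\varepsilon$ and about passing to a countable determining class of test functions.
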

\begin{proof}
	It is enough to show that for any $f\in C_c^\infty(\mathbb{C})$,
	\[\frac{1}{n}\sum\limits_{k=1}^{n}\left(f(a_k)-f(a_k+\sigma_kX_k)\right)\rightarrow 0,\]
	almost surely. Fix $\epsilon>0$,  choose $M$ such that $\mathbb{P}(|X_n|>M)<\epsilon.$ Then,
	\begin{align}
	\frac{1}{n}|\sum\limits_{k=0}^{n}(f(a_k)-f(a_k+\sigma_kX_k)| & \leq \frac{1}{n}\sum\limits_{k=1}^{n}|(f(a_k)-f(a_k+\sigma_kX_k))\text{$\mathbbm{1}$}\{|X_k|>M\}|\\&+\frac{1}{n}\sum\limits_{k=1}^{n}|(f(a_k)-f(a_k+\sigma_kX_k))\text{$\mathbbm{1}$}\{|X_k|\leq M\}|,\\
	&\leq\frac{ 2||f||_\infty}{n} \sum\limits_{k=1}^{n}\text{$\mathbbm{1}$}\{|X_k|>M\}+\frac{1}{n}\sum\limits_{k=1}^{n}|\sigma_kX_k|||f'||_\infty,\\
	&\leq \frac{ 2||f||_\infty}{n} \sum\limits_{k=1}^{n}\text{$\mathbbm{1}$}\{|X_k|>M\} + \frac{M||f'||_\infty}{n}\sum\limits_{k=1}^{n}\sigma_k. \label{eqn:perturb_limit_measure}
	\end{align}
	Using law of large numbers and $\sigma_n\downarrow0$in the above equation \ref{eqn:perturb_limit_measure} we have 
	\[
	\lim\limits_{n\rightarrow\infty}\frac{1}{n}\bigg|\sum\limits_{k=0}^{n}(f(a_k)-f(a_k+\sigma_kX_k)\bigg|  \leq 2||f||_\infty\epsilon.
	\]
	Because $\epsilon>0$ is arbitrary, we get  $\lim\limits_{n\rightarrow \infty}\frac{1}{n}\sum\limits_{k=1}^{n}\left(f(a_k)-f(a_k+\sigma_kX_k)\right)=0$.
\end{proof}

The main idea in proving the corollaries is that we condition the random sequences suitably, so that the resulting sequences satisfy the hypothesis of the Theorem \ref{thm2} and then apply to obtain the result. More formally, say we condition the sequence on the event $E$. Assume the conditioned sequence can be realized as a random sequence which satisfies the hypothesis of Theorem \ref{thm2}. Let $\nu_n^E$ be the empirical measure of the critical points of the degree-$n$ polynomial formed by conditioned sequence. Fix $\epsilon>0$, then 
\begin{align}
\mathbb{P}\left(d(\nu_n,\mu)>\epsilon\right) &=\ee{\text{$\text{$\mathbbm{1}$}\{d(\nu_n,\mu)>\epsilon\}$}},\\
&=\ee{\cee{\text{$\text{$\mathbbm{1}$}\{d(\nu_n,\mu)>\epsilon\}$}}{\text{$E$}}},\\
&=\ee{\text{$\text{$\mathbbm{1}$}\{d(\nu_n^E,\mu)>\epsilon\}$}}. \label{eqn:convergence}
\end{align}

We will use the following inequalities, whenever required.
\begin{align}
\log_+|ab| & \leq \log_+|a| + \log_+|b| \label{logplusprod}\\
\log_-|ab| & \leq \log_-|a| +\log_-|b| \label{logminusprod}\\
\log_+|a_1+a_2+ \dots + a_n| & \leq \log_+|a_1| + \log_+|a_2|+ \dots +\log_+|a_n| +\log(n)\label{logplussum}
\end{align}

\begin{remark}
	The inequality \eqref{logplussum} is obtained by using the inequalities
	$|a_1+\dots+a_n|\leq |a_1|+\dots+|a_n| \leq n\max\limits_{i\leq n}{|a_i|}$
	and 
	$\log_+(\max\limits_{i\leq n}|a_i|)\leq \log_+|a_1|+\dots+\log_+|a_n|.$
	
\end{remark}

\begin{lemma}\label{lemma:log-cesaro-bounded}
	Let $\{a_n\}_{n\geq1}$ be a sequence that is log-Ces\'{a}ro bounded and $\{b_n\}_{n\geq1}$ be a sequence such that $b_n=a_n+\sigma_nX_n$, $\sigma_n\downarrow0$ and $X_1,X_2,\dots$ are i.i.d. random variables with $\ee{\text{$\log_+|X_1|$}}<\infty$. Then the sequence $\{b_n\}_{n\geq1}$ is also log-Ces\'{a}ro bounded.
\end{lemma}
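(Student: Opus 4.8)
The plan is to control the Cesàro means $\frac1n\sum_{i=1}^n \log_+|b_i|$ directly via the elementary sub-additivity estimates recorded in \eqref{logplusprod} and \eqref{logplussum}, splitting the sum into a piece governed by the hypothesis on $\{a_n\}$, a harmless constant coming from $\{\sigma_n\}$, and a piece governed by the strong law of large numbers applied to $\{\log_+|X_n|\}$.

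First I would apply \eqref{logplussum} with two summands to write
\[
\log_+|b_i| = \log_+|a_i + \sigma_i X_i| \le \log_+|a_i| + \log_+|\sigma_i X_i| + \log 2,
\]
and then \eqref{logplusprod} to bound $\log_+|\sigma_i X_i| \le \log_+|\sigma_i| + \log_+|X_i|$. Since $\sigma_n\downarrow 0$, the sequence $\{\sigma_n\}$ is bounded by $\sigma_1$, so $\log_+|\sigma_i|\le \log_+\sigma_1 =: C_0 < \infty$ for every $i$ (in fact $\log_+|\sigma_i|\to 0$, but boundedness is all that is needed). Summing over $i\le n$ and dividing by $n$ gives
\[
\frac1n\sum_{i=1}^n \log_+|b_i| \le \frac1n\sum_{i=1}^n \log_+|a_i| \;+\; \frac1n\sum_{i=1}^n \log_+|X_i| \;+\; C_0 + \log 2.
\]

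The first term on the right is bounded in $n$ by the assumption that $\{a_n\}$ is log-Ces\'{a}ro bounded. For the second term, since the $X_i$ are i.i.d.\ with $\ee{\log_+|X_1|}<\infty$, the strong law of large numbers gives $\frac1n\sum_{i=1}^n \log_+|X_i| \to \ee{\log_+|X_1|}$ almost surely; a convergent real sequence is bounded, so this term is almost surely bounded in $n$. Hence the left-hand side is almost surely bounded in $n$, i.e.\ $\{b_n\}_{n\ge1}$ is log-Ces\'{a}ro bounded almost surely, which is exactly what is needed when this lemma is invoked together with Lemma \ref{lemma:perturb_limit_measure} to verify the hypotheses of Theorem \ref{thm2} in the proof of Corollary \ref{Symmetric perturbations}.

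There is no serious obstacle here. The only points deserving a word of care are that $\log_+|\sigma_i|$ must be bounded uniformly in $i$ (immediate from the monotonicity of $\sigma_n$) and that ``bounded'' in the conclusion is to be understood almost surely, since $\{b_n\}$ is random; everything else is the two inequalities \eqref{logplusprod}, \eqref{logplussum} and the strong law of large numbers. (If one instead wanted a sure statement for non-identically-distributed $X_n$ under, say, $\sup_n \ee{\log_+|X_n|}<\infty$, the SLLN step would be replaced by a truncation/$L^1$ argument, but that is not needed for the application.)
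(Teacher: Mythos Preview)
Your argument is correct and follows the same route as the paper: bound $\log_+|b_k|$ by $\log_+|a_k|+\log_+|\sigma_k|+\log_+|X_k|+\log 2$ via \eqref{logplussum} and \eqref{logplusprod}, then control the three Ces\`aro means using the log-Ces\'aro hypothesis on $\{a_n\}$, the fact that $\sigma_n\downarrow 0$, and the strong law of large numbers for $\log_+|X_k|$. The only cosmetic difference is that the paper notes $\frac{1}{n}\sum_{k=1}^n \log_+|\sigma_k|\to 0$ whereas you use the cruder uniform bound $\log_+|\sigma_k|\le \log_+\sigma_1$; both suffice.
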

\begin{proof}
	\begin{align}
	\frac{1}{n}\sum\limits_{k=1}^{n}\log_+|b_k| & \leq \frac{1}{n}\sum\limits_{k=1}^{n}(\log_+(|a_k|+|a_k-b_k|)),\\
	&\leq \frac{1}{n}\sum\limits_{k=1}^{n}\log_+|a_k|+\frac{1}{n}\sum\limits_{k=1}^{n}\log_+|\sigma_kX_k|+\log(2)\\
	&\leq \frac{1}{n}\sum\limits_{k=1}^{n}\log_+|a_k|+\frac{1}{n}\sum\limits_{k=1}^{n}\log_+|\sigma_k|+\frac{1}{n}\sum\limits_{k=1}^{n}\log_+|X_k|+\log(2) \label{eqn:lemma:log-cesaro:1}
	\end{align}
	The sequence $\{\frac{1}{n}\sum_{k=1}^{n}\log_+|\sigma_k|\}_{n\geq1}$ goes to $0$, because $\lim\limits_{n\rightarrow0}\sigma_n=0$. Using  law of large numbers and the fact that $\ee{\text{$\log_+|X_1|$}}<\infty$, the sequence $\{\frac{1}{n}\sum_{k=1}^{n}\log_+|X_k|\}_{n\geq1}$ is bounded almost surely. Combining \eqref{eqn:lemma:log-cesaro:1} and the above facts we get that the sequence $\frac{1}{n}\sum\limits_{k=1}^{n}\log_+|b_k|$ is bounded. This completes the proof.
\end{proof}

\begin{proof}[Proof of Corollary \ref{Symmetric perturbations}]
	
	Fix $r_n$ and $\theta_n$ for $n \geq 1$. Choose  $E=\{w:X_n(w)=\pm r_ne^{i\theta_n} \linebreak\text{ for }n\geq1\}$. Because $X_n$s are symmetric random variables,  the $n^{th}$ term of the resulting sequence will be $u_n + \sigma_nr_ne^{i\theta_n}$ or $u_n - \sigma_nr_ne^{i\theta_n}$ with equal probability independent of other terms. Choose $a_n=u_n+\sigma_nr_ne^{i\theta_n}$ and $b_n=u_n-\sigma_nr_ne^{i\theta_n}$.	 We need to show that almost surely the sequences $\{a_n\}_{n\geq1}$ and $\{b_n\}_{n\geq1}$ satisfy the hypotheses of the Theorem \ref{thm2}. It follows from Lemmas \ref{lemma:perturb_limit_measure} and \ref{lemma:log-cesaro-bounded} the sequences $\{a_n\}_{n \geq 1}$ and $\{b_n\}_{n \geq 1}$ are $\mu$-distributed and log-Ces\'{a}ro bounded almost surely. 
\end{proof}

\begin{proof}[Proof of Corollary \ref{corollary4:kabluchko}]
	If $\mu$ is a degenerate probability measure then the result is trivial to verify. If $\mu$ is not deterministic then choose two independent sequences of random numbers $\{a_n\}_{n\geq1}$ and $\{b_n\}_{n\geq1}$, where $a_n$s and $b_n$s are i.i.d random numbers obtained from measure $\mu$. Choose $X_n= a_n$ or $b_n$ with equal probability independent of other terms, then $\{X_n\}_{n\geq1}$ is a sequence of i.i.d random variables distributed according to probability measure $\mu$. Using the hypothesis $\int\limits_{\mathbb{C}}\log_+|z|d\mu(z)<\infty$ and applying law of large numbers for the random variables $\{\log_+|X_n|\}_{n \geq 1}$, we get that the sequences $\{a_n\}_{n\geq1}$ and $\{b_n\}_{n\geq1}$ are log-Ces\'{a}ro bounded almost surely. Therefore the constructed sequences satisfy the hypothesis of the Theorem \ref{thm2}. 
\end{proof}
\begin{lemma}\label{random_seq_limit}
	Let $\{a_k\}_{k\geq1}$ and $\{b_k\}_{k\geq1}$ be two sequences which are $\mu$ and $\nu$ distributed respectively. Define a random sequence $\{\xi_k\}_{k\geq1}$, where $\xi_k=a_k$ with probability $p$ and $\xi_k=b_k$ with probability $1-p$. Then $\mu_n=\frac{1}{n}\sum\limits_{k=1}^{n}\delta_{\xi_k}$ weakly converge to $\lambda=p\mu+(1-p)\nu$ almost surely.
\end{lemma}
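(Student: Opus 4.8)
The plan is to reduce the assertion to a statement about a single test function and then upgrade it to a single almost-sure event by separability. First I would fix $f \in C_c^\infty(\mathbb{C})$ and set $B_k := \mathbbm{1}\{\xi_k = a_k\}$, so that $B_1, B_2, \dots$ are independent Bernoulli$(p)$ variables and $f(\xi_k) = f(a_k)B_k + f(b_k)(1 - B_k)$. Then
\[
\int f \, d\mu_n = \frac{1}{n}\sum_{k=1}^n f(a_k) B_k + \frac{1}{n}\sum_{k=1}^n f(b_k)(1 - B_k).
\]
I would treat the first sum by writing $f(a_k)B_k = p\,f(a_k) + Y_k$ with $Y_k := f(a_k)(B_k - p)$: the $Y_k$ are independent, centred, and bounded by $2\|f\|_\infty$, so $\sum_{k\ge1}\var{Y_k}/k^2 < \infty$ and Kolmogorov's strong law for independent summands gives $\frac1n\sum_{k=1}^n Y_k \to 0$ almost surely. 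Since $\{a_k\}$ is $\mu$-distributed, $\frac1n\sum_{k=1}^n f(a_k) \to \int f\,d\mu$, whence $\frac1n\sum_{k=1}^n f(a_k)B_k \to p\int f\,d\mu$ a.s. Running the same argument on the second sum (Bernoulli parameter $1-p$, sequence $\{b_k\}$ which is $\nu$-distributed) gives $\frac1n\sum_{k=1}^n f(b_k)(1-B_k) \to (1-p)\int f\,d\nu$ a.s.; adding yields $\int f\,d\mu_n \to \int f\,d\lambda$ almost surely.

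To get a single probability-one event on which $\mu_n \xrightarrow{w} \lambda$, I would invoke the metrizability of the weak topology on $\mathcal{P}(\mathbb{C})$: there is a countable family $\{f_j\}_{j\ge1} \subset C_c(\mathbb{C})$ that is convergence-determining (take, for instance, a countable set that is uniformly dense in $C_c(\mathbb{C})$, using an exhaustion of $\mathbb{C}$ by compact sets). For each $j$ the previous paragraph produces a full-measure event $\Omega_j$ on which $\int f_j\,d\mu_n \to \int f_j\,d\lambda$; on $\Omega_0 := \bigcap_{j\ge1}\Omega_j$, which has probability one, this holds for every $f_j$, and a standard $3\varepsilon$ approximation then gives $\int f\,d\mu_n \to \int f\,d\lambda$ for every $f \in C_c(\mathbb{C})$. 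Hence $\mu_n \xrightarrow{w}\lambda$ on $\Omega_0$, which is the claimed almost sure weak convergence.

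I do not expect any genuine obstacle in this lemma. The only step that needs a little care is the passage from ``for each fixed $f$, almost surely'' to ``almost surely, simultaneously for all $f$'', which is precisely what the separability of $C_c(\mathbb{C})$ / metrizability of weak convergence is used for. I would also note that $\mu$ and $\nu$ are not assumed compactly supported, but since every $f \in C_c^\infty(\mathbb{C})$ is bounded, the quantities $f(a_k)$ and $f(b_k)$ are bounded regardless of where the $a_k, b_k$ lie, and this boundedness is all that the strong law above requires.
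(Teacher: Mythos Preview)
Your argument is correct and follows essentially the same route as the paper: both proofs apply Kolmogorov's strong law for independent (not identically distributed) bounded summands to show that the random fluctuations vanish, and then use the deterministic $\mu$- and $\nu$-distributedness for the centering terms. The only difference is cosmetic: the paper works with indicators $\mathbbm{1}\{\xi_k\in U\}$ of open sets rather than $C_c^\infty$ test functions, and it omits the separability step you spell out to pass from ``for each $f$, a.s.'' to ``a.s., for all $f$''. Your choice of smooth compactly supported $f$ is in fact slightly cleaner here, since $\frac{1}{n}\sum_k f(a_k)\to\int f\,d\mu$ follows directly from the paper's definition of $\mu$-distributed, whereas for indicators of open sets one would strictly need $\mu$-continuity sets.
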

\begin{proof}
	It is enough to show that for any open set $U\subset\mathbb{C}$, $ \frac{1}{n}\sum\limits_{k=1}^{n}\mathbbm{1}\left\{\xi_k \in U\right\}$ converge to $\lambda(U) $ almost surely. But from a version of law of large numbers we know that if $X_1,X_2,\dots$ are independent random variables (not necessarily identical), then 
	\[
	\frac{1}{n}\sum\limits_{k=1}^{n}\left(X_k-\ee{\text{$X_k$}}\right)\xrightarrow{a.s}0
	\]
	provided that $\sum\limits_{k=1}^{\infty}\frac{1}{k^2}\var{X\textbf{$_k$}}<\infty$. Applying this to the random variables $\mathbbm{1}\left\{\xi_k \in U\right\}$ we get that $\frac{1}{n}\sum\limits_{k=1}^{n}\mathbbm{1}\left\{\xi_k \in U\right\}$ converge to $\lambda(U)$ almost surely.
\end{proof}

\begin{proof}[Proof of Corollary \ref{corollary:thm3}]
	
	Choose $a_1,a_2,\dots$ be i.i.d $\mbox{Bernoulli}(p)$ random variables. Let $\{k_n\}_{n\geq1}$  be a random sequence such that $a_{k_n}=1$ and $a_\ell=0$ whenever $\ell \notin \{k_1,k_2,\dots\}$. Define $L_n^{(1)}(z)=L_{k_n}(z)=\frac{P_n'(z)}{P_n(z)}$. It is enough to show that $\frac{1}{n}\Delta\log|L_{k_n}(z)|\rightarrow 0$ in probability. The sequences $\{a_n\}_{n\geq1}$ and $\{z_n\}_{n\geq1}$ satisfy the hypothesis of the Theorem \ref{thm3}. Therefore $\frac{1}{n}\Delta \log|L_n(z)|\rightarrow 0$ in probability. Because $\{L_{k_n}^{(1)}(z)\}_{n\geq1}$ is a subsequence of $\{L_n(z)\}_{n\geq1}$ it follows that $\frac{1}{k_n}\Delta \log|L_{k_n}^{(1)}(z)|\rightarrow 0$ in probability. Because $k_n$ is a negative binomial random variable with parameters $(n,p)$, we have $\frac{k_n}{n}\rightarrow p$ almost surely. Therefore, $\frac{1}{n}\Delta \log|L_{k_n}^{(1)}(z)|\rightarrow 0$ in probability.
\end{proof}
In the next section we provide proofs for both the Theorems \ref{thm2}, \ref{thm-tri} and \ref{thm3}.
\section{Proofs of Theorems \ref{thm2}, \ref{thm-tri} and  \ref{thm3}.}\label{sec4}
\label{ch:proofs5}
\subsection{Outline of proofs.}
The proofs here are adapted from the proof of  Kabluchko's theorem as presented in \cite{kabluchko}. The proofs involve in analyzing the function  $L_n(z):=\frac{P_n'(z)}{P_n(z)} = \sum\limits_{k=1}^{n}\frac{1}{z-\xi_k}$.
We shall prove the theorems by showing that the hypotheses of the Theorems  \ref{thm2}, \ref{thm-tri} and \ref{thm3} imply the following three statements. 

\begin{align}
&\text{For Lebesgue a.e. $z \in \mathbb{C}$ }  \text{ and for every }\epsilon>0, \lim\limits_{n\rightarrow \infty}\mathbb{P}\left(\frac{1}{n}\log|L_n(z)|>\epsilon\right)=0.  \tag{A1} \label{A1}\\
&\text{For Lebesgue a.e. $z \in \mathbb{C}$ } \text{ and for every }\epsilon>0, \lim\limits_{n\rightarrow \infty}\mathbb{P}\left(\frac{1}{n}\log|L_n(z)|<-\epsilon\right)=0. \tag{A2} \label{A2}\\
&\text{For any }r>0, \text{the sequence }\left\{\int_{\text{$\mathbb{D}$}_r}\frac{1}{n^2}\log^2|L_n(z)|\right\}_{n\geq 1} \text{ is tight.} \tag{A3} \label{A3}
\end{align}

Statements \eqref{A1} and \eqref{A2} assert that $\frac{1}{n}\log|L_n(z)|$ converge to $0$ in probability. Statement \eqref{A3} assert that the sequence $\{\int\limits_{\text{$\mathbb{D}$}_r}\frac{1}{n^2}\log^2|L_n(z)|\}_{n \geq 1}$ is tight. A lemma of Tao and Vu links the above two facts to yield that $\{\int\limits_{\text{$\mathbb{D}$}_r}\frac{1}{n}\log|L_n(z)|\}_{n \geq 1}$ converge to $0$ in probability. We state this lemma below.

\begin{lemma}[Lemma~3.1 in~\cite{taovu}]\label{lem:tao_vu}
	Let $(X,\mathcal{A},\nu)$ be a finite measure space and $f_n:X\to \mathbb{R}$, $n\geq 1$  random functions which are defined over a probability space $(\Omega, \mathcal{B}, \mathbb{P})$ and are jointly measurable with respect to $\mathcal{A}\otimes \mathcal{B}$.
	Assume that:
	\begin{enumerate}
		\item For $\nu$-a.e.\ $x\in X$ we have $f_n(x)\to 0$ in probability, as $n\to\infty$.
		\item For some $\delta>0$, the sequence $\int_X |f_n(x)|^{1+\delta} d\nu(x)$ is tight.
	\end{enumerate}
	Then, $\int_X f_n(x)d\nu(x)$ converge in probability to $0$.
\end{lemma}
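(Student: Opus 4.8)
The plan is to prove the lemma by a truncation argument that isolates the contribution of the large values of $f_n$, which is controlled by the tightness hypothesis, from that of the bounded part, which is controlled by the pointwise (in $x$) convergence in probability together with Fubini's theorem. Fix $\epsilon>0$, and for a truncation level $M>0$ write $f_n=g_n^M+h_n^M$, where $g_n^M:=f_n\mathbbm{1}\{|f_n|\le M\}$ and $h_n^M:=f_n\mathbbm{1}\{|f_n|>M\}$. The joint measurability assumption on $(x,\omega)\mapsto f_n(x,\omega)$ is what legitimizes treating $\int_X f_n\,d\nu$ (and the analogous integrals of $g_n^M$, $h_n^M$, $|f_n|^{1+\delta}$) as bona fide random variables, and what permits the applications of Fubini's theorem below.

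For the large part, use the elementary pointwise bound $|h_n^M|\le |f_n|^{1+\delta}M^{-\delta}$, which integrates to $\left|\int_X h_n^M\,d\nu\right|\le M^{-\delta}\int_X|f_n|^{1+\delta}\,d\nu$. By hypothesis (2), for every $\eta>0$ there is $M=M(\eta)>0$, independent of $n$, with $\mathbb{P}\!\left(\int_X|f_n|^{1+\delta}\,d\nu>\tfrac{\epsilon}{2}M^{\delta}\right)<\eta$ for all $n$; hence $\mathbb{P}\!\left(\left|\int_X h_n^M\,d\nu\right|>\tfrac{\epsilon}{2}\right)<\eta$ for all $n$.

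For the bounded part, keep $M$ fixed. Since $|g_n^M(x,\cdot)|\le |f_n(x,\cdot)|$ pointwise, hypothesis (1) gives $g_n^M(x,\cdot)\to 0$ in probability for $\nu$-a.e.\ $x$; as $|g_n^M|\le M$, convergence in probability upgrades to $L^1$ convergence, so $\mathbb{E}|g_n^M(x,\cdot)|\to 0$ for $\nu$-a.e.\ $x$, while $\mathbb{E}|g_n^M(x,\cdot)|\le M$ uniformly in $x$ and $n$. Because $\nu$ is finite, dominated convergence in the $x$-variable yields $\int_X \mathbb{E}|g_n^M(x,\cdot)|\,d\nu(x)\to 0$, and Fubini gives $\mathbb{E}\left|\int_X g_n^M\,d\nu\right|\le \int_X \mathbb{E}|g_n^M(x,\cdot)|\,d\nu(x)\to 0$. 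Markov's inequality then gives $\mathbb{P}\!\left(\left|\int_X g_n^M\,d\nu\right|>\tfrac{\epsilon}{2}\right)\to 0$ as $n\to\infty$.

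Combining the two estimates via $\mathbb{P}\!\left(\left|\int_X f_n\,d\nu\right|>\epsilon\right)\le \mathbb{P}\!\left(\left|\int_X g_n^M\,d\nu\right|>\tfrac{\epsilon}{2}\right)+\mathbb{P}\!\left(\left|\int_X h_n^M\,d\nu\right|>\tfrac{\epsilon}{2}\right)$: given $\eta>0$, first choose $M=M(\eta)$ as above so the second term is $<\eta$ for every $n$, then let $n\to\infty$ to kill the first term, obtaining $\limsup_n \mathbb{P}\!\left(\left|\int_X f_n\,d\nu\right|>\epsilon\right)\le\eta$; since $\eta>0$ is arbitrary, $\int_X f_n\,d\nu\to 0$ in probability. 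This lemma is not deep; the only points requiring care are the measure-theoretic bookkeeping — using joint measurability to justify Fubini and to interpret the integrals as random variables — and verifying that the truncated sequence $g_n^M$ still converges to $0$ in probability $\nu$-a.e., which I expect to be the only place where one must be slightly attentive rather than a genuine obstacle.
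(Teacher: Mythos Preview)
Your proof is correct. Note, however, that the paper does not supply its own proof of this lemma: it is simply quoted from Tao and Vu (Lemma~3.1 in \cite{taovu}) and used as a black box, so there is no ``paper's proof'' to compare against. Your truncation argument is exactly the standard one and matches what one finds in the cited source; the measure-theoretic bookkeeping you flag (joint measurability for Fubini, persistence of convergence in probability under truncation) is handled correctly.
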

Thus it follows from the above assertions \eqref{A1}, \eqref{A2}, \eqref{A3} and Lemma \ref{lem:tao_vu}, that  $\int\limits_{\text{$\mathbb{D}$}_r}\frac{1}{n}\log|L_n(z)|dm(z) \rightarrow 0$ in probability for any $r>0$. Choose any $f\in C_c^{\infty}(\mathbb{C})$, assume that $\mbox{support}(f) \subseteq \text{$\mathbb{D}$}_r$ and define $f_n(z)=\frac{1}{n}\left(\log|L_n(z)|\right)\Delta f(z)$. Because $f$ is a bounded function and $\frac{1}{n}\log|L_n(z)|$ satisfy the hypothesis of Lemma \ref{lem:tao_vu}, the functions $f_n$ also satisfy the hypothesis of Lemma \ref{lem:tao_vu}. Therefore we get that $\int\limits_{\text{$\mathbb{D}$}_r}f_n(z)dm(z)\rightarrow 0$ in probability.  Applying Green's theorem twice we have the identity,
\[\int_{\text{$\mathbb{D}$}_r}^{}f(z)\Delta\frac{1}{n}\log|L_n(z)| = \int_{\text{$\mathbb{D}$}_r}^{}\frac{1}{n}\log|L_n(z)|\Delta f(z)dm(z).
\] 
The left hand side of the above integral is defined in the sense of distributions. Therefore it follows that
$\int_{\text{$\mathbb{D}$}_r}^{} f(z)\frac{1}{n}\Delta\log|L_n(z)|\rightarrow 0 $ in probability. This suffices for Theorem \ref{thm3}. We complete the proof of Theorem \ref{thm2} by the following arguments. In the sense of distributions we have

\begin{equation}\label{eqn:distribution}
\int_{\text{$\mathbb{D}$}_r}^{} f(z)\frac{1}{n}\Delta\log|L_n(z)| = \frac{1}{n}\sum\limits_{k=1}^{n}f(\xi_k)-\frac{1}{n}\sum\limits_{k=1}^{n-1}f(\eta_k^{(n)})
\end{equation}

From Lemma \ref{random_seq_limit} it follows that the sequence $\{\xi_n\}_{n\geq1}$ is $\mu$-distributed. Hence \linebreak $\frac{1}{n}\sum_{k=1}^{n}f(\xi_k) \rightarrow \int\limits_{\text{$\mathbb{D}$}_r}f(z)d\mu(z)$ almost surely. Therefore from \eqref{eqn:distribution} we get, \begin{equation}
\frac{1}{n}\sum_{k=1}^{n-1}f(\eta_k^{(n)}) \rightarrow \int\limits_{\text{$\mathbb{D}$}_r}f(z)d\mu(z) \text{  in probability.} \label{proof_1}
\end{equation}  Because for any $f \in C_c^\infty(\text{$\mathbb{C}$})$ and $\epsilon>0$, the sets of the form $\{\mu:|\int\limits_{\text{$\mathbb{C}$}}f(z)d\mu(z)|<\epsilon\}$ form an open base at origin, from Definition \ref{modes of convergence} and \eqref{proof_1} it follows that  $\frac{1}{n-1}\sum_{k=1}^{n-1}\delta_{\eta_i^{(n)}} \xrightarrow{w} \mu$ in probability.

We show \eqref{A1}, by obtaining moment bounds for $L_n(z)$. To show \eqref{A2} we will use a concentration bound for the function $L_n(z)$. In either of the Theorems \ref{thm2} and \ref{thm3}, observe that $L_n(z)$ is a sum of independent random variables. We state a  version of Kolmogorov-Rogozin inequality below to be used later in the proofs to get the concentration bounds for $L_n(z)$.

\paragraph{Kolmogorov-Rogozin inequality (multi-dimensional version)}\label{KR-Inequality} [Corollary 1. of Theorem 6.1 in \cite{KR1}.]
Let $X_1,X_2, \dots $ be independent random vectors in $\mathbb{R}^\text{$n$}$. Define the concentration function, 
$
Q(X,\delta) := \sup_{a\in \mathbb{R}^\text{$n$}}\mathbb{P}(X \in B(a,\delta)).
$
Let $\delta_i \leq \delta$ for each $i$, then 
\begin{equation}
Q(X_1+\dots + X_n,\delta) \leq \frac{C\delta}{\sqrt{\sum_{i=1}^{n}\delta_i^2(1-Q(X_i,\delta_i))}}.\label{kol-rog-ineq}
\end{equation}

It remains to show that the hypotheses of Theorems \ref{thm2} and \ref{thm3} imply \eqref{A1}, \eqref{A2} and \eqref{A3}. We show this in the subsequent sections.

\subsection{Proofs of Theorems \ref{thm2} and \ref{thm-tri}}

In the following lemma we show that the hypothesis of the Theorem \ref{thm2} imply \eqref{A1}.

\begin{lemma}\label{momentbound}
	Let $\{s_{n,k}\}_{n\geq1;1\leq k\leq n}$ be any triangular array of numbers. Define $L_n(z)=\sum\limits_{k=1}^{n}\frac{1}{z-s_{n,k}}$. Then for any $\epsilon > 0$, 
	and for Lebesgue a.e. $z \in \mathbb{C}$, $$\limsup\limits_{n\rightarrow \infty}\frac{1}{n}\log|L_n(z)|<\epsilon.$$
\end{lemma}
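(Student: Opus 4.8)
\emph{Approach.} The assertion is deterministic, so no probabilistic input enters; the plan is to bound a low-order $L^t$-norm of $L_n$ on each disk $\mathbb{D}_r$ and then run a Borel--Cantelli argument for Lebesgue measure $m$. The only elementary fact I would need is that for every $r>0$ and every $t\in(0,2)$,
\[
C_{r,t}:=\sup_{s\in\mathbb{C}}\int_{\mathbb{D}_r}\frac{dm(z)}{|z-s|^{t}}<\infty .
\]
Indeed, substituting $w=z-s$ shows that for $s$ ranging over any bounded set this integral is at most $\int_{\mathbb{D}_{R}}|w|^{-t}\,dm(w)$ for $R$ large, which is finite because $|w|^{-t}$ is locally integrable in the plane exactly when $t<2$; and for $|s|$ large the integral is at most $\pi r^{2}\,\mathrm{dist}(s,\mathbb{D}_r)^{-t}\to 0$. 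Hence the supremum over all of $\mathbb{C}$ is finite.

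\emph{Main estimate.} Fix $t\in(0,1)$. Since $|L_n(z)|\le\sum_{k=1}^{n}|z-s_{n,k}|^{-1}$ and $x\mapsto x^{t}$ is subadditive on $[0,\infty)$ when $t\le 1$, I would bound
\[
\int_{\mathbb{D}_r}|L_n(z)|^{t}\,dm(z)\ \le\ \sum_{k=1}^{n}\int_{\mathbb{D}_r}\frac{dm(z)}{|z-s_{n,k}|^{t}}\ \le\ n\,C_{r,t}.
\]
For a fixed $\epsilon>0$, Markov's inequality then gives
\begin{align*}
m\bigl(\{z\in\mathbb{D}_r:\tfrac1n\log|L_n(z)|>\epsilon\}\bigr)
&=m\bigl(\{z\in\mathbb{D}_r:|L_n(z)|^{t}>e^{n\epsilon t}\}\bigr)\\
&\le e^{-n\epsilon t}\int_{\mathbb{D}_r}|L_n(z)|^{t}\,dm(z)\ \le\ n\,C_{r,t}\,e^{-n\epsilon t},
\end{align*}
and the right-hand side is summable in $n$. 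By the Borel--Cantelli lemma in its finite-measure form (if $\sum_n m(A_n)<\infty$ then $m(\limsup_n A_n)=0$), for $m$-a.e.\ $z\in\mathbb{D}_r$ the inequality $\tfrac1n\log|L_n(z)|>\epsilon$ holds for only finitely many $n$, i.e.\ $\limsup_{n\to\infty}\tfrac1n\log|L_n(z)|\le\epsilon$ for a.e.\ $z\in\mathbb{D}_r$.

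\emph{Conclusion and the (non-)obstacle.} To upgrade this to the stated conclusion --- a strict inequality, valid for a.e.\ $z\in\mathbb{C}$ and every $\epsilon>0$ --- I would intersect over a sequence $\epsilon_j\downarrow 0$ and take the union over the exhaustion $\mathbb{D}_1\subset\mathbb{D}_2\subset\cdots$; discarding the resulting countable union of Lebesgue-null sets leaves a full-measure set on which $\limsup_{n\to\infty}\tfrac1n\log|L_n(z)|\le 0<\epsilon$ for all $\epsilon>0$. I do not expect a genuine obstacle here. The one thing to keep track of is the exponent: $t$ must be $<1$ for the subadditivity step that keeps the bound linear in $n$, and $<2$ for local integrability of $|z|^{-t}$, and these are compatible, so any $t\in(0,1)$ works. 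It is worth emphasizing that the bound $n\,C_{r,t}$ is uniform in the array $\{s_{n,k}\}$, which is precisely what makes this lemma usable later with the random roots $\xi_k$ in place of $s_{n,k}$, with no control on where they fall.
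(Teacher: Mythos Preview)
Your proof is correct, and it takes a genuinely different route from the paper's. The paper argues geometrically: it sets $A_n^{\epsilon}=\bigcup_{k\le n}\{z:|z-s_{n,k}|<e^{-n\epsilon}\}$, notes that $m(A_n^{\epsilon})\le n\pi e^{-2n\epsilon}$ is summable, and applies Borel--Cantelli directly to these covering sets; off $\limsup_n A_n^{\epsilon}$ one has $|z-s_{n,k}|\ge e^{-n\epsilon}$ for all $k$ and all large $n$, whence $|L_n(z)|\le n\,e^{n\epsilon}$. Your argument instead controls an $L^t$-norm: subadditivity of $x\mapsto x^t$ for $t\in(0,1)$ together with the uniform bound $\sup_{s}\int_{\mathbb{D}_r}|z-s|^{-t}\,dm(z)<\infty$ gives $\int_{\mathbb{D}_r}|L_n|^t\le nC_{r,t}$, and Markov's inequality then yields the summable measure bound $nC_{r,t}e^{-n\epsilon t}$ before the same Borel--Cantelli step. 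The paper's version is a touch shorter and avoids the auxiliary integrability fact about $|z|^{-t}$; your version has the mild advantage of producing an explicit $L^t$ estimate on $L_n$ that is uniform in the array, which makes the later uniform-in-the-roots philosophy transparent. Either way the conclusion upgrades to $\limsup\le 0$ after intersecting over $\epsilon_j\downarrow 0$, exactly as you do.
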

\begin{proof}
	Define  $A_n^{\epsilon}=\bigcup\limits_{k=1}^{n}\{z:|z-s_{n,k}|<e^{-n\epsilon} \}$ and $F^{\epsilon}=\limsup\limits_{n \rightarrow \infty} A_n^{\epsilon}$, then $F^\epsilon$ are decreasing sets in $\epsilon$. For these sets we have $\sum\limits_{n=1}^{\infty}m(A_n^{\epsilon}) \leq \sum\limits_{n=1}^{\infty}2\pi ne^{-2n\epsilon} < \infty$, where $m$ is Lebesgue measure on complex plane. Applying Borel-Cantelli lemma to the sequence $\{A_n^{\epsilon}\}_{n\geq 1}$ we get $m(F^{\epsilon})=0$. Because $F^\epsilon$ are decreasing sets in $\epsilon$, we have that if  $F=\bigcup\limits_{\epsilon>0}F^{\epsilon}$, then $m(F)=0$. Choose $z \in F^c$, there is $N_z^{\epsilon}$ such that for any $n>N_z^{\epsilon}$ we have $z \notin A_n^{\epsilon}$. Therefore $\frac{1}{|z-\xi_n|}>e^{n\epsilon}$ is satisfied only for finitely many $n$. Hence we have $|L_n(z)|<M+ne^{n\epsilon}$, where $M$ is a finite number obtained from the sum of terms for which the inequality $\frac{1}{|z-\xi_n|}>e^{n\epsilon}$ is violated. It follows from here  $\limsup\limits_{n\rightarrow \infty}\frac{1}{n}\log|L_n(z)|<\epsilon$. Therefore for $z\notin F$, we have $\limsup\limits_{n\rightarrow \infty}\frac{1}{n}\log|L_n(z)|<\epsilon.$
\end{proof}

\begin{lemma}\label{kolmogorov-rogozin}
	Let $L_n(z)=\sum\limits_{k=1}^{n}\frac{1}{z-\xi_k}$ where $\xi_ks$ are as in the Theorem \ref{thm2}. Then for any $\epsilon > 0$, 
	and almost every $z$ we have $\lim\limits_{n\rightarrow \infty}\mathbb{P}(\frac{1}{n}\log|L_n(z)|\leq -\epsilon)= 0$.
\end{lemma}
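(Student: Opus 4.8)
\emph{Proof proposal.} The plan is to bound $\mathbb{P}\!\left(\tfrac1n\log|L_n(z)|\le-\epsilon\right)=\mathbb{P}\!\left(|L_n(z)|\le e^{-n\epsilon}\right)$ by the concentration function of $L_n(z)$ and then apply the Kolmogorov--Rogozin inequality \eqref{kol-rog-ineq}, using that $L_n(z)=\sum_{k=1}^{n}X_k$ with $X_k:=\tfrac{1}{z-\xi_k}$ is a sum of \emph{independent} random vectors in $\mathbb{R}^2\cong\mathbb{C}$. Since $\mathbb{P}(|L_n(z)|\le e^{-n\epsilon})\le Q(L_n(z),e^{-n\epsilon})$, it suffices to show the right-hand side tends to $0$.

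First I would fix $z$ outside the countable, hence Lebesgue-null, set $\{a_k:k\ge1\}\cup\{b_k:k\ge1\}$. For such $z$ every $X_k$ is an honest random variable, taking the two values $\tfrac{1}{z-a_k}$ and $\tfrac{1}{z-b_k}$ with probability $\tfrac12$ each; these coincide exactly when $a_k=b_k$, and when $a_k\neq b_k$ they lie at mutual distance $d_k:=\dfrac{|a_k-b_k|}{|z-a_k|\,|z-b_k|}>0$. Consequently, for any $\delta<\tfrac12 d_k$ no ball of radius $\delta$ contains both values, so $Q(X_k,\delta)\le\tfrac12$.

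Next I would apply \eqref{kol-rog-ineq} with $\delta=\delta_1=\dots=\delta_n=e^{-n\epsilon}$. Put $N_n:=\#\{k\le n:\ a_k\neq b_k\ \text{and}\ d_k>2e^{-n\epsilon}\}$. Each index counted in $N_n$ contributes at least $\tfrac12 e^{-2n\epsilon}$ to $\sum_{k=1}^{n}\delta_k^2\bigl(1-Q(X_k,\delta_k)\bigr)$, while indices with $a_k=b_k$ are deterministic and contribute $0$, harmlessly. Hence
\[
\mathbb{P}\!\left(\tfrac1n\log|L_n(z)|\le-\epsilon\right)\ \le\ Q\!\bigl(L_n(z),e^{-n\epsilon}\bigr)\ \le\ \frac{C\,e^{-n\epsilon}}{\sqrt{\tfrac12 e^{-2n\epsilon}\,N_n}}\ =\ \frac{C\sqrt{2}}{\sqrt{N_n}}.
\]
It then remains to check $N_n\to\infty$: since $a_k\neq b_k$ for infinitely many $k$ and each such $d_k$ is a fixed positive constant (this is where the choice of $z$ enters), given any $m$ one fixes indices $k_1<\dots<k_m$ with $a_{k_j}\neq b_{k_j}$ and observes that $N_n\ge m$ for every $n\ge k_m$ large enough that $e^{-n\epsilon}<\tfrac12\min_{j\le m}d_{k_j}$; thus $\liminf_n N_n\ge m$ for all $m$, and the bound above tends to $0$.

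The step I expect to matter most is the choice of scales in \eqref{kol-rog-ineq}: taking $\delta_k=\delta=e^{-n\epsilon}$ makes the factor $e^{-2n\epsilon}$ in the denominator cancel the $e^{-n\epsilon}$ in the numerator, so one needs only $N_n\to\infty$ rather than $N_n$ growing linearly in $n$. This is exactly why the bare hypothesis ``$a_k\neq b_k$ for infinitely many $k$'' suffices, and why neither the $\mu$-distribution nor the log-Ces\`aro boundedness of $\{a_k\},\{b_k\}$ enters this lemma (those hypotheses are needed only for \eqref{A1} and \eqref{A3}).
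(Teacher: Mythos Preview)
Your proposal is correct and follows essentially the same route as the paper: fix $z$ off the countable set $\{a_k\}\cup\{b_k\}$, apply the Kolmogorov--Rogozin inequality with $\delta_i=\delta=e^{-n\epsilon}$ so that the exponential factors cancel, observe that $Q(X_k,e^{-n\epsilon})\le\tfrac12$ whenever $d_k>2e^{-n\epsilon}$, and then argue that the count of such indices tends to infinity because each fixed $k$ with $a_k\neq b_k$ eventually satisfies the inequality. Your set $N_n$ is precisely the paper's $|S_n|$, and your final remark about why only ``infinitely many $a_k\neq b_k$'' is needed (and not the $\mu$-distribution or log-Ces\`aro hypotheses) is on point.
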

\begin{proof}
	Fix $z \neq a_k \text{ or } b_k$ for any $k\geq 1$.
	From  Kolmogorov-Rogozin inequality \eqref{kol-rog-ineq} and taking $\delta_i=\delta=e^{-n\epsilon}$ we have, 
	
	\begin{equation}\label{kreq}
	\mathbb{P}\left(\bigg|\sum\limits_{k=1}^{n}\frac{1}{z-\xi_k}\bigg|<e^{-n\epsilon}\right) \leq \frac{C}{\sqrt{\sum_{k=1}^{n}(1-Q(\frac{1}{z-\xi_k},e^{-n\epsilon}))}}.
	\end{equation}
	We shall show that $\sum_{k=1}^{n}(1-Q(\frac{1}{z-\xi_k},e^{-n\epsilon}))$ goes to $\infty$. Observe that,
	\begin{align}
	Q\left(\frac{1}{z-\xi_k},e^{-n\epsilon}\right) =& \sup\limits_{\alpha \in \mathbb{C}}\mathbb{P}\left( \bigg|\frac{1}{z-\xi_k}-\alpha\bigg|<e^{-n\epsilon} \right)
	\leq \frac{1}{2},   
	\end{align}
	whenever $|\frac{1}{z-a_k}-\frac{1}{z-b_k}| > 2e^{-n\epsilon}$. 
		
	Define $S_n=\{k\leq n:|\frac{1}{z-a_k}-\frac{1}{z-b_k}| > 2e^{-n\epsilon}\}$. Notice that if $a_k \neq b_k$, then there is $N_k$ such that whenever $n>N_k$, we have $k\in S_n$. Because $a_k\neq b_k$ for infinitely many $k$, $|S_n|$ increases to infinity as $n \rightarrow \infty$. The denominator on the right hand side of \eqref{kreq} is at least $\sqrt{\frac{|S_n|}{2}}$.
	Therefore $\mathbb{P}\left(\bigg|\sum\limits_{k=1}^{n}\frac{1}{z-\xi_k}\bigg|<e^{-n\epsilon}\right)\leq \frac{C\sqrt{2}}{\sqrt{|S_n|}}\rightarrow 0$, as $n \rightarrow \infty$. 
\end{proof}

\begin{lemma}\label{kolmogorov-rogozin-tri}
	Let $L_n(z)=\sum\limits_{k=1}^{n}\frac{1}{z-\xi_{k,n}}$ where $\xi_{k,n}s$ are as in the Theorem \ref{thm-tri}. Then for any $\epsilon > 0$, 
	and almost every $z$ we have $\lim\limits_{n\rightarrow \infty}\mathbb{P}(\frac{1}{n}\log|L_n(z)|\leq -\epsilon)= 0$.
\end{lemma}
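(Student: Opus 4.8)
The plan is to imitate the proof of Lemma~\ref{kolmogorov-rogozin} essentially line by line; the only genuine change occurs in the final counting step, where the standing hypothesis $\sum_{i=1}^{n}\log_+\frac{1}{|a_{n,i}-b_{n,i}|}=o(n^{2})$ of Theorem~\ref{thm-tri} takes over the role played there by ``$a_{k}\neq b_{k}$ for infinitely many $k$''.

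First I fix $z$ outside the countable exceptional set $\{a_{n,k}\}\cup\{b_{n,k}\}$, which is Lebesgue-a.e.\ $z$. For each $k\le n$ the summand $\frac{1}{z-\xi_{k,n}}$ assumes the two values $\frac{1}{z-a_{n,k}}$ and $\frac{1}{z-b_{n,k}}$ with probability $\tfrac12$ each, so by the same elementary reasoning as in Lemma~\ref{kolmogorov-rogozin} (two points more than $2e^{-n\epsilon}$ apart cannot both lie in a ball of radius $e^{-n\epsilon}$) one has $Q\bigl(\tfrac{1}{z-\xi_{k,n}},e^{-n\epsilon}\bigr)\le\tfrac12$ whenever
\[
\Bigl|\tfrac{1}{z-a_{n,k}}-\tfrac{1}{z-b_{n,k}}\Bigr|=\frac{|a_{n,k}-b_{n,k}|}{|z-a_{n,k}|\,|z-b_{n,k}|}>2e^{-n\epsilon}.
\]
Let $S_{n}$ be the set of indices $k\le n$ for which this holds. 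Applying the Kolmogorov--Rogozin inequality~\eqref{kol-rog-ineq} with $\delta_{i}=\delta=e^{-n\epsilon}$ gives, exactly as in the proof of Lemma~\ref{kolmogorov-rogozin},
\[
\mathbb{P}\bigl(|L_{n}(z)|<e^{-n\epsilon}\bigr)\le\frac{C}{\sqrt{\sum_{k=1}^{n}\bigl(1-Q(\tfrac{1}{z-\xi_{k,n}},e^{-n\epsilon})\bigr)}}\le\frac{C\sqrt{2}}{\sqrt{|S_{n}|}},
\]
so everything reduces to showing $|S_{n}|\to\infty$, i.e.\ $|S_{n}^{c}|=o(n)$.

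For the counting estimate, note that if $k\notin S_{n}$ then, after taking logarithms and bounding each term by its positive part,
\[
\log_+\frac{1}{|a_{n,k}-b_{n,k}|}+\log_+|z-a_{n,k}|+\log_+|z-b_{n,k}|\ \ge\ n\epsilon-\log 2\ \ge\ \frac{n\epsilon}{2}
\]
for all large $n$. Summing over $k\in S_{n}^{c}$ and then enlarging to the sum over all $k\le n$,
\[
\frac{n\epsilon}{2}\,|S_{n}^{c}|\ \le\ \sum_{k=1}^{n}\log_+\frac{1}{|a_{n,k}-b_{n,k}|}+\sum_{k=1}^{n}\log_+|z-a_{n,k}|+\sum_{k=1}^{n}\log_+|z-b_{n,k}|.
\]
By hypothesis the first sum is $o(n^{2})$; using $\log_+|z-a|\le\log 2+\log_+|z|+\log_+|a|$ (a consequence of \eqref{logplusprod}) together with the log-Ces\'{a}ro boundedness of $\{a_{n,k}\}$ and $\{b_{n,k}\}$, the remaining two sums are $O(n)$. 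Hence $|S_{n}^{c}|=o(n)$, so $|S_{n}|\to\infty$, and the displayed probability tends to $0$ for a.e.\ $z$, which is the assertion of the lemma (the passage from $<$ to $\le$ in the event $\{\tfrac1n\log|L_n(z)|\le-\epsilon\}$ is absorbed by a harmless adjustment of constants).

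The main, and essentially the only, difficulty specific to the triangular setting is that $a_{n,k}$ and $b_{n,k}$ now move with $n$, so one can no longer argue that each fixed index $k$ eventually enters $S_{n}$; the quantitative bound $\sum_{i}\log_+\frac{1}{|a_{n,i}-b_{n,i}|}=o(n^{2})$ is precisely what forces all but a $o(n)$ fraction of indices to have $a_{n,k}$ and $b_{n,k}$ separated by more than roughly $e^{-n\epsilon}$ after dividing by $|z-a_{n,k}|\,|z-b_{n,k}|$, which is exactly what makes the corresponding Bernoulli summand sufficiently spread out. Controlling $\sum_{k\le n}\log_+|z-a_{n,k}|$ for a fixed finite $z$ is routine and costs nothing beyond log-Ces\'{a}ro boundedness, so no further hypothesis on $z$ is needed.
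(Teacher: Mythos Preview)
Your proof is correct and follows the same overall strategy as the paper (Kolmogorov--Rogozin plus a count of indices where $\frac{1}{z-a_{n,k}}$ and $\frac{1}{z-b_{n,k}}$ are well separated), but the counting step is organized differently. The paper extracts three large index sets: $C_n$ of size $\lfloor 3n/4\rfloor$ on which $\log_+\frac{1}{|a_{n,k}-b_{n,k}|}\le\alpha_n=o(n)$ (via a Markov-type pigeonhole from the $o(n^2)$ hypothesis), and $A_n,B_n$ of size $>3n/4$ on which $|a_{n,k}|,|b_{n,k}|\le M$ (via $\mu$-distributedness), then intersects them to get $|S_n|\ge n/4$. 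You instead take logarithms of the separation inequality, sum, and control $\sum_k\log_+|z-a_{n,k}|$ by log-Ces\'aro boundedness rather than $\mu$-distributedness; this yields the stronger conclusion $|S_n^{c}|=o(n)$ without any set extraction. Your route is slightly more direct and uses a different (but equally available) hypothesis of Theorem~\ref{thm-tri} for the denominator control. One cosmetic point: ``$|S_n|\to\infty$, i.e.\ $|S_n^c|=o(n)$'' is not an equivalence, but since you actually prove the latter this does not affect correctness.
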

\begin{proof}
	Fix any $z \in \mathbb{C}$ that does not agree with any of the terms in the given arrays.
	From  Kolmogorov-Rogozin inequality \eqref{kol-rog-ineq} and taking $\delta_i=\delta=e^{-n\epsilon}$ we have, 
	
	\begin{eqnarray}\label{kreq-tri}
	\mathbb{P}\left(\bigg|\sum\limits_{k=1}^{n}\frac{1}{z-\xi_{k,n}}\bigg|<e^{-n\epsilon}\right) \leq \frac{C}{\sqrt{\sum_{k=1}^{n}(1-Q(\frac{1}{z-\xi_{k,n}},e^{-n\epsilon}))}}.
	\end{eqnarray}
	It is enough to show that $\sum_{k=1}^{n}(1-Q(\frac{1}{z-\xi_{k,n}},e^{-n\epsilon}))$ goes to $\infty$.
	
	 Because we have that
	$\sum\limits_{k=1}^{n}\log_+\frac{1}{|a_{k,n}-b_{k,n}|}=o(n^2)$, there are sets $C_n \in \{1,2,\dots,n\}$ such that $|C_n|=[\frac{3n}{4}]$ and such that whenever $k\in C_n$, we have $\log_+\frac{1}{|a_{k,n}-b_{k,n}|}=o(n)$. The given two triangular arrays are $\mu$-distributed. Therefore we can choose $M>0$ and $N\in \mathbb{N}$, such that for any $n>N$ we have $A_n=\{k:|a_{k,n}|>M\}$ and $B_n=\{k:|b_{k,n}|>M\}$ satisfying $|A_n|>\frac{3n}{4}$ and $|B_n|>\frac{3n}{4}$. For $k\in A_n\cap B_n\cap C_n$, we have 
\begin{eqnarray}\label{ineq-tri}
\big|\frac{1}{z-a_{k,n}}-\frac{1}{z-b_{k,n}}\big|&=&\frac{|a_{k,n}-b_{k,n}|}{|z-a_{k,n}||z-b_{k,n}|}\\ &\geq& \frac{|a_{k,n}-b_{k,n}|}{2|z|+|b_{k,n}|+|a_{k,n}|}\\&\geq& \frac{|a_{k,n}-b_{k,n}|}{2|z|+2M}.
\end{eqnarray}

	Let $\log_+\frac{1}{|a_{k,n}-b_{k,n}|}=\alpha_n=o(n)$. Therefore from \ref{ineq-tri} we have $\big|\frac{1}{z-a_{k,n}}-\frac{1}{z-b_{k,n}}\big| \geq \frac{e^{-\alpha_n}}{2(|z|+M)}$. Hence for sufficiently large $n$, and we get $Q(\frac{1}{z-\xi_{k,n}},e^{-n\epsilon})=\frac{1}{2}$. Because $|A_n\cap B_n\cap C_n|\geq \frac{n}{4}$, the sum  $\sum_{k=1}^{n}(1-Q(\frac{1}{z-\xi_{k,n}},e^{-n\epsilon}))$ is at least $\frac{n}{8}$. Therefore the right hand side of \ref{kreq-tri} approaches $0$ as $n\rightarrow \infty$.
\end{proof}

\begin{lemma}\label{tight}
		Let $\{s_{n,k}\}_{n\geq1;1\leq k\leq n}$ be any log-C\'{e}saro bounded triangular array of numbers. Define $L_n(z)=\sum\limits_{k=1}^{n}\frac{1}{z-s_{n,k}}$. Then, for any $r>0$, the sequence $\{\int_{\text{$\mathbb{D}$}_r}\frac{1}{n^2}\log^2|L_n(z)|dm(z)\}_{n\geq1}$ is bounded. 
\end{lemma}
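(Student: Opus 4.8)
The plan is to split the integrand into a contribution from the (many) zeros $s_{n,k}$ that are far from the disk $\mathbb{D}_r$ and a contribution from those that are close, and to bound the two-point correlation of $\log^2$ by the sum of $\log_+$ and $\log_-$ contributions using the inequalities \eqref{logplusprod}, \eqref{logminusprod}, \eqref{logplussum}. First I would write $\log|L_n(z)| = \log_+|L_n(z)| - \log_-|L_n(z)|$, so that $\log^2|L_n(z)| \le 2\log_+^2|L_n(z)| + 2\log_-^2|L_n(z)|$, and handle the two pieces separately; since we only need boundedness of the integral, crude bounds suffice. For the $\log_-$ piece, note $|L_n(z)| \ge |z-s_{n,k_0}|^{-1} - \sum_{k\ne k_0}|z-s_{n,k}|^{-1}$ is awkward, so instead I would bound $\log_-|L_n(z)|$ directly by controlling the set where $|L_n(z)|$ is small; the key observation is that $\log_-|L_n(z)| \le \log_+\bigl(\sum_k |z-s_{n,k}|\bigr) + \log_+ n + (\text{something})$ is not immediate, so more carefully one uses $|L_n(z)| = |P_n'(z)/P_n(z)|$ and the fact that on $\mathbb{D}_r$ the function $\log|P_n(z)|$ and $\log|P_n'(z)|$ are each $\log$ of a polynomial, whose $L^2(\mathbb{D}_r)$-norms of $\log^2$ are controlled by the leading coefficients and the sums $\sum_k \log_+|s_{n,k}|$ — this is where log-Ces\`aro-boundedness enters.

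Concretely, the cleanest route is: (i) bound $\int_{\mathbb{D}_r}\log_+^2|L_n(z)|\,dm(z)$. Using $|L_n(z)| \le \sum_{k=1}^n |z-s_{n,k}|^{-1}$ and \eqref{logplussum}, $\log_+|L_n(z)| \le \log n + \sum_k \log_+|z-s_{n,k}|^{-1} = \log n + \sum_k \log_-|z-s_{n,k}|$. Then by Cauchy--Schwarz $\log_+^2|L_n(z)| \le 2\log^2 n + 2n\sum_k \log_-^2|z-s_{n,k}|$, and since $\int_{\mathbb{D}_r}\log_-^2|z-s|\,dm(z)$ is bounded by a universal constant $C_r$ uniformly in $s$ (the logarithmic singularity is square-integrable in the plane), we get $\int_{\mathbb{D}_r}\log_+^2|L_n(z)|\,dm(z) \le 2\pi r^2\log^2 n + 2n^2 C_r$, hence $\frac{1}{n^2}\int_{\mathbb{D}_r}\log_+^2|L_n(z)|\,dm(z)$ is bounded. (ii) For the $\log_-$ piece, write $L_n = P_n'/P_n$ where $P_n(z) = \prod_{k=1}^n(z-s_{n,k})$, so $\log_-|L_n(z)| \le \log_+|P_n(z)| + \log_-|P_n'(z)|$. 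The first term: $\log_+|P_n(z)| \le \sum_k \log_+(|z|+|s_{n,k}|) \le n\log_+(2r) + \log 2\cdot n + \sum_k\log_+|s_{n,k}|$, which is $O(n)$ on $\mathbb{D}_r$ by log-Ces\`aro-boundedness, contributing $O(1)$ after dividing by $n^2$ and integrating. The term $\int_{\mathbb{D}_r}\log_-^2|P_n'(z)|\,dm(z)$ is the genuine difficulty: $\log_-|P_n'|$ can be large near zeros of $P_n'$, but $P_n'$ is a polynomial of degree $n-1$ whose number of zeros in $\mathbb{D}_r$ is at most $n-1$, and $\int_{\mathbb{D}_r}\log_-^2|z-w|\,dm(z) \le C_r$ uniformly in $w$, so expanding $P_n'(z) = c_n\prod_{j=1}^{n-1}(z-\eta_j)$ and using $\log_-|P_n'(z)| \le \log_-|c_n| + \sum_j\log_-|z-\eta_j|$, Cauchy--Schwarz again gives $\int_{\mathbb{D}_r}\log_-^2|P_n'|\,dm \le 2\pi r^2\log_-^2|c_n| + 2(n-1)\sum_j\int_{\mathbb{D}_r}\log_-^2|z-\eta_j|\,dm \le 2\pi r^2\log_-^2|c_n| + 2(n-1)^2 C_r$. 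Here $c_n$ is the leading coefficient of $P_n'$, namely $n$ times the leading coefficient of $P_n$, so $c_n = n$ and $\log_-|c_n| = 0$; thus this term is also $O(n^2)$.

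Assembling the pieces, $\frac{1}{n^2}\int_{\mathbb{D}_r}\log^2|L_n(z)|\,dm(z) \le \frac{2}{n^2}\int_{\mathbb{D}_r}(\log_+^2 + \log_-^2)|L_n(z)|\,dm(z)$ is bounded uniformly in $n$, as required. The main obstacle I anticipate is the bookkeeping around $\log_-|P_n'|$: one must be careful that $\log_-|L_n| \le \log_+|P_n| + \log_-|P_n'|$ actually follows from $|L_n| = |P_n'|/|P_n|$ (indeed $\log_-(x/y) = \log_+(y/x) \le \log_+(1/x)\cdot\mathbb{1} + \log_+ y = \log_-(x)\cdot(\cdots) + \log_+ y$, which needs the split $\log_+(y/x)\le\log_+ y + \log_+(1/x) = \log_+ y + \log_- x$, valid by \eqref{logplusprod}), and that the degree of $P_n'$ and its leading coefficient are exactly as claimed so that no spurious unbounded term appears. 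A secondary point is verifying the elementary uniform bound $\sup_{w\in\mathbb{C}}\int_{\mathbb{D}_r}\log_-^2|z-w|\,dm(z) < \infty$, which follows since the integral is largest when $w\in\overline{\mathbb{D}_r}$ and then reduces to $\int_{\mathbb{D}_{2r}}\log_-^2|u|\,dm(u)$, finite because $\log^2|u|$ is integrable near $0$ in two dimensions.
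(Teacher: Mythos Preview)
Your proposal is correct and follows essentially the same route as the paper: split $\log^2|L_n|$ into $\log_+^2$ and $\log_-^2$; bound $\log_+|L_n|$ via \eqref{logplussum} and Cauchy--Schwarz together with the uniform estimate $\sup_{w}\int_{\mathbb{D}_r}\log_-^2|z-w|\,dm(z)<\infty$; and for $\log_-|L_n|$ write $L_n=P_n'/P_n$, use $\log_-|L_n|\le\log_+|P_n|+\log_-|P_n'|$, control $\log_+|P_n|$ by the log-Ces\`aro hypothesis, and control $\log_-|P_n'|$ by factoring $P_n'(z)=n\prod_j(z-\eta_j^{(n)})$ (so the leading coefficient contributes nothing to $\log_-$) and applying Cauchy--Schwarz plus the same uniform $\log_-^2$ integral bound. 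The only cosmetic differences are that the paper uses the exact identity $\log^2=\log_+^2+\log_-^2$ (no factor $2$) and groups all $n+1$ terms in a single Cauchy--Schwarz step rather than splitting into two.
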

\begin{proof}
	
	We will first decompose $\log|L_n(z)|$ into its positive and negative parts and analyze them separately. Let $\log|L_n(z)|=\log_+|L_n(z)|-\log_-|L_n(z)|$. Then,
	\[\int_{\text{$\mathbb{D}$}_r}\frac{1}{n^2}\log^2|L_n(z)|dm(z)= \int_{\text{$\mathbb{D}$}_r}\frac{1}{n^2}\log_+^2|L_n(z)|dm(z)+\int_{\text{$\mathbb{D}$}_r}\frac{1}{n^2}\log^2_-|L_n(z)|dm(z).\]
	Using \eqref{logplussum}, we get,
	\begin{align}
	\int_{\text{$\mathbb{D}$}_r}\frac{1}{n^2}\log_+^2|L_n(z)|dm(z) & = \int_{\text{$\mathbb{D}$}_r}\frac{1}{n^2}\log_+^2\bigg|\sum\limits_{k=1}^{n}\frac{1}{z-s_{n,k}}\bigg|dm(z),\\
	& \leq \int_{\text{$\mathbb{D}$}_r}\frac{1}{n^2}\left(\sum\limits_{k=1}^{n}\log_+\bigg|\frac{1}{z-s_{n,k}}\bigg|+\log(n)\right)^2dm(z).
	\end{align}
	
	Using the Cauchy-Schwarz inequality $(a_1+a_2+\dots+a_n)^2\leq n(a_1^2+a_2^2+\dots+a_n^2)$ for the above, we get,
	
	\begin{align}
	\int_{r\text{$\mathbb{D}$}}\frac{1}{n^2}\log_+^2|L_n(z)|dm(z) 
	& \leq \int_{\text{$\mathbb{D}$}_r}\frac{n+1}{n^2}\left(\sum\limits_{k=1}^{n}\log_+^2\bigg|\frac{1}{z-s_{n,k}}\bigg|+\log^2(n)\right)dm(z),\\
	& = \frac{n+1}{n^2}\sum\limits_{k=1}^{n}\int_{\text{$\mathbb{D}$}_r}\log_-^2|z-s_{n,k}|dm(z) + \frac{n+1}{n^2}\log^2(n)\pi r^2.\label{eqn:lemma:tight:1}
	\end{align}
	Because Lebesgue measure on complex plane is translation invariant, we have $$\int_{\text{$\mathbb{D}$}_r}\log_-^2|z-\xi|dm(z)=\int_{\xi+\text{$\mathbb{D}$}_r}\log_-^2|z|dm(z)\leq\int_{\text{$\mathbb{D}$}_1}\log_-^2|z|dm(z)<\infty.$$ Therefore $\sup\limits_{\xi\in \text{$\mathbb{C}$}}\int_{K}\log^2|z-\xi|dm(z)<\infty$ for any compact set $K \subset \mathbb{C}$ it can be seen that each of the terms in the final expression \eqref{eqn:lemma:tight:1} are bounded. Hence the sequence $\{\int_{\text{$\mathbb{D}$}_r}\frac{1}{n^2}\log_+^2|L_n(z)|dm(z)\}_{n\geq1}$ is bounded.
	
	We will now show that the sequence $\{\int_{\text{$\mathbb{D}$}_r}\frac{1}{n^2}\log_-^2|L_n(z)|dm(z)\}_{n\geq1}$ is bounded.
	Let $P_n(z)=\prod\limits_{k=1}^{n}(z-s_{n,k})$ and $P_n'(z)=n\prod\limits_{k=1}^{n-1}(z-\eta_{k}^{(n)})$. Applying inequality \eqref{logminusprod} and Cauchy-Schwarz inequality we get,
	
	\begin{align}
	\int_{\text{$\mathbb{D}$}_r}\frac{1}{n^2}\log_-^2|L_n(z)|dm(z) & = \int_{\text{$\mathbb{D}$}_r}\frac{1}{n^2}\log_-^2\bigg|\frac{P_n'(z)}{P_n(z)}\bigg|dm(z),\\
	& \leq  \int_{\text{$\mathbb{D}$}_r}\frac{2}{n^2}\log_-^2|P_n'(z)|dm(z)+ \int_{\text{$\mathbb{D}$}_r}\frac{2}{n^2}\log_-^2\bigg|\frac{1}{P_n(z)}\bigg|dm(z).
	\end{align}
	Again applying inequalities \eqref{logminusprod}, \eqref{logplusprod}, \eqref{logplussum} and Cauchy-Schwarz inequality to the above we obtain,
	\begin{align}
	\int_{\text{$\mathbb{D}$}_r}\frac{1}{n^2}\log_-^2 & |L_n(z)|dm(z)\\ 	
	& \leq \int_{\text{$\mathbb{D}$}_r}\frac{2}{n^2}\left(\sum\limits_{k=1}^{n-1}\log_-|z-\eta_{k}^{(n)}|\right)^2dm(z) + \int_{\text{$\mathbb{D}$}_r}\frac{2}{n^2}\left(\sum_{k=1}^{n}\log_+|z-s_{n,k}|\right)^2dm(z),\\
	& \leq \frac{2}{n}\sum\limits_{k=1}^{n-1}\int_{\text{$\mathbb{D}$}_r}\log_-^2|z-\eta_{k}^{(n)}|dm(z) \label{eqn:lemma:tight1}\\&+ 2\int_{\text{$\mathbb{D}$}_r}\left(\log(2)+\log_+|z|+\frac{1}{n}\sum\limits_{k=1}^{n}\log_+|s_{n,k}|\right)^2dm(z).
	\label{eqn:lemma:tight2}
	\end{align}

	From the hypothesis, we have that both the triangular array $\{s_{n,k}\}_{n\geq1;1\leq k\leq n}$ is log-Ces\'{a}ro bounded Therefore \eqref{eqn:lemma:tight2} is bounded uniformly in $n$. Using the fact that $\sup\limits_{\xi\in \text{$\mathbb{C}$}}\int_{K}\log^2|z-\xi|dm(z)<\infty$ , we get \eqref{eqn:lemma:tight1} is bounded uniformly in $n$.

	From the above facts we get that the sequence $\left\{\frac{1}{n^2}\int_{\text{$\mathbb{D}$}_r}\log^2|L_n(z)|dm(z)\right\}_{n\geq1}$ is bounded. 
\end{proof}
Lemmas \ref{momentbound}, \ref{kolmogorov-rogozin}, \ref{tight} show that the statements \eqref{A1}, \eqref{A2} and \eqref{A3} are satisfied. Hence the Theorem \ref{thm2} and Theorem \ref{thm-tri} are proved.

\subsection{Proof of Theorem \ref{thm3}}

We will prove the theorem when $\omega=0$ i.e, $0$ is not a limit point of the sequence $\{z_n\}_{n\geq1}$. For other cases we can translate all the points by $\omega$ and apply the theorem. We will first prove a general lemma for sequences of numbers which will later be used in proving the subsequent lemmas.
\begin{lemma}\label{liminf}
	Given any sequence $\{z_k\}_{k\geq1}$, where $z_k \in \mathbb{C}$, $\liminf\limits_{n \rightarrow \infty}\left(\inf\limits_{|z|=r}|z-z_n|^{\frac{1}{n}}\right) \geq 1$ for Lebesgue a.e. $r \in \mathbb{R^+}$ w.r.t Lebesgue measure. 
\end{lemma}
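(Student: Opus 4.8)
The plan is to turn the statement into a routine Borel--Cantelli estimate on the half-line. The first step is to compute $\inf_{|z|=r}|z-z_n|$ explicitly: writing $z_n = |z_n|e^{i\theta_n}$ and $z = re^{i\phi}$, one has $|z-z_n|^2 = r^2 + |z_n|^2 - 2r|z_n|\cos(\phi-\theta_n)$, which is minimized over $\phi$ at $\phi = \theta_n$ and equals $\big(r-|z_n|\big)^2$; when $z_n = 0$ the infimum is $r$, which is the same formula. Hence $\inf_{|z|=r}|z-z_n| = \big|\,r-|z_n|\,\big|$, and, setting $\rho_n := |z_n|$, the quantity to be controlled is simply $\big|\,r - \rho_n\,\big|^{1/n}$.

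Next I would identify the exceptional set of radii. If $\liminf_{n\rightarrow\infty}\big|\,r-\rho_n\,\big|^{1/n} < 1$ for some $r > 0$, then there is a rational $\delta \in (0,1)$ and infinitely many indices $n$ with $\big|\,r-\rho_n\,\big|^{1/n} \le \delta$, i.e.\ $\big|\,r-\rho_n\,\big| \le \delta^n$. Consequently the set of exceptional radii is contained in $\bigcup_{\delta \in \mathbb{Q}\cap(0,1)} B_\delta$, where
\[
B_\delta := \Big\{\, r \in \mathbb{R}^+ : \big|\,r-\rho_n\,\big| \le \delta^n \text{ for infinitely many } n \,\Big\} = \limsup_{n\rightarrow\infty}\big(\,[\rho_n-\delta^n,\ \rho_n+\delta^n]\cap\mathbb{R}^+\,\big).
\]
Since a countable union of Lebesgue-null sets is null, it suffices to prove that $B_\delta$ has Lebesgue measure zero for each fixed $\delta$.

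This last point is the easy direction of the Borel--Cantelli lemma for Lebesgue measure $m$ on $\mathbb{R}$: the $n$-th set in the $\limsup$ defining $B_\delta$ is an interval of length at most $2\delta^n$, and $\sum_{n\ge 1} 2\delta^n < \infty$ because $0 < \delta < 1$, so $m(B_\delta) \le m\big(\bigcup_{n\ge N}[\rho_n-\delta^n,\rho_n+\delta^n]\big) \le \sum_{n\ge N} 2\delta^n \rightarrow 0$ as $N\rightarrow\infty$. Collecting everything, $\liminf_{n\rightarrow\infty}\big(\inf_{|z|=r}|z-z_n|^{1/n}\big) \ge 1$ off a Lebesgue-null set of radii, which is the claim. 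There is no substantial obstacle in this argument; the only mild care needed is in choosing, for a given $r$ with $\liminf < 1$, a rational $\delta$ strictly between that $\liminf$ and $1$ so that infinitely many terms of the realizing subsequence actually fall below $\delta$, together with the elementary geometric observation that $\inf_{|z|=r}|z-z_n|$ depends on $z_n$ only through $|z_n|$.
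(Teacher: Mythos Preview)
Your proof is correct and follows essentially the same Borel--Cantelli argument as the paper: both reduce to observing that the ``bad'' radii at scale $\delta$ (or $1-\epsilon$) lie in intervals of length $2\delta^n$ around $|z_n|$, sum these lengths, and then take a countable union over the scale parameter. Your version is slightly more explicit in computing $\inf_{|z|=r}|z-z_n|=\big|\,r-|z_n|\,\big|$ and in using rational $\delta$ to ensure the union is countable, but the idea is identical.
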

\begin{proof}
	Fix $\epsilon > 0$ and let $A_n = \{r>0:\inf\limits_{|z|=r}|z-z_n|< (1-\epsilon)^{n} \}$. Let $m$ denote the Lebesgue measure on the complex plane. Then,
	\begin{align}
	m\left(\left\{r>0 :\liminf\limits_{n \rightarrow \infty}(\inf\limits_{|z|=r}|z-z_n|^\frac{1}{n}) \leq (1-\epsilon)\right\}\right) & =  m\left(\limsup\limits_{n \rightarrow \infty}A_n\right) \\
	& \leq  \lim\limits_{k \rightarrow \infty}m\left(\mathop{\cup}_{n \geq k} A_n\right)  
	\end{align}
	If $r \in A_k, $ then from the definition of $A_k$ we have that $r \in [|z_k|-(1-\epsilon)^k,|z_k|+(1-\epsilon)^k] $. Hence we get,	
	\begin{align}
	m&\left(\left\{r>0 :\liminf\limits_{n \rightarrow \infty}(\inf\limits_{|z|=r}|z-z_n|^\frac{1}{n}) \leq (1-\epsilon)\right\}\right)\\ & \leq \lim\limits_{k\rightarrow \infty } \sum_{n=k}^{\infty}m\left(\left\{r:|z_n|-(1-\epsilon)^{n} \leq r \leq |z_n|+(1-\epsilon)^n\right\}\right)\\
	& \leq  \lim\limits_{k \rightarrow \infty } \sum_{n=k}^{\infty} 2(1-\epsilon)^n = 0
	\end{align}
	The above is true for every $\epsilon >0$, therefore $\liminf\limits_{n \rightarrow \infty}\left(\inf\limits_{|z|=r}|z-z_n|^{\frac{1}{n}}\right) \geq 1$ outside an exceptional set $E\subset\mathbb{R}^\text{$+$}$ whose Lebesgue measure is $0$.  
\end{proof}
Define the set $F=\{z:\liminf\limits_{n \rightarrow \infty}|z-z_n|^{\frac{1}{n}} < 1\}$. Because $0$ is not a limit point of $\{z_n\}_{n\geq1}$, we have $\liminf\limits_{n\rightarrow\infty}|z_n|^\frac{1}{n}\geq1$. Hence $0\notin F$. For $|z|=r$, we have \[\liminf\limits_{n \rightarrow \infty}|z-z_n|^\frac{1}{n}\geq\liminf\limits_{n \rightarrow \infty}\left(\inf\limits_{|z|=r}|z-z_n|^{\frac{1}{n}}\right).\]
Hence $F\subseteq \{z:|z|=r, r\in E\}$  and by invoking Fubini's theorem we get $m( \{z:|z|=r, r\in E\})=0$. From the above two observations it follows that $m(F)=0$.

The following lemma shows that the hypothesis of the Theorem \ref{thm3} implies \eqref{A1}.
\begin{lemma}\label{momentthm3}
	Let $L_n(z)$ be as in the Theorem \ref{thm3}. Then for any $\epsilon>0$, and Lebesgue a.e. $z \in \mathbb{C}$,
	$$\limsup\limits_{n \rightarrow \infty}\frac{1}{n}\log|L_n(z)|< \epsilon$$ almost surely.
\end{lemma}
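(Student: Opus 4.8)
The plan is to bound $|L_n(z)|$ by the triangle inequality with $\sum_{k=1}^n |a_k|/|z-z_k|$ and then to control the two sources of exponential growth separately: the size of the coefficients $a_k$, handled by a Borel--Cantelli argument using $\mathbb{E}|a_1|<\infty$, and the distance from $z$ to the poles $z_k$, handled via the null set $F=\{z:\liminf_n|z-z_n|^{1/n}<1\}$ just shown to satisfy $m(F)=0$. Note that the compact-set hypothesis of Theorem~\ref{thm3} will not be needed for this lemma.

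First I would dispose of the trivial case where $a_1\equiv 0$: then $L_n\equiv 0$ and the claim is immediate. Otherwise, since $\mathbb{E}|a_1|<\infty$ we have $\sum_{k\ge1}\mathbb{P}(|a_k|>k)=\sum_{k\ge1}\mathbb{P}(|a_1|>k)\le\mathbb{E}|a_1|<\infty$, so the Borel--Cantelli lemma gives an almost-sure event on which there is a (random) index $N_1$ with $|a_k|\le k$ for all $k\ge N_1$. (Alternatively one could invoke the fact recalled before Corollary~\ref{corollary4:kabluchko} that $\limsup_k|a_k|^{1/k}=1$ almost surely, which would give $|a_k|\le(1+\eta)^k$ eventually for any $\eta>0$.) The key point is that this almost-sure event does not depend on $z$.

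Now fix $\epsilon>0$ and pick $\delta\in(0,1)$ with $\log\frac1{1-\delta}<\epsilon$. Take $z$ outside $F$ and outside the countable set $\{z_k\}_{k\ge1}$, which excludes only a Lebesgue-null set of $z$. By definition of $F$ we have $\liminf_n|z-z_n|^{1/n}\ge1$, so there is $N_2=N_2(z,\delta)$ with $|z-z_n|\ge(1-\delta)^n$ for all $n\ge N_2$. Setting $N_0=\max(N_1,N_2)$, for $n\ge N_0$ we get
\[
|L_n(z)|\le\sum_{k=1}^{N_0-1}\frac{|a_k|}{|z-z_k|}+\sum_{k=N_0}^{n}\frac{|a_k|}{|z-z_k|}\le M+\sum_{k=N_0}^{n}\frac{k}{(1-\delta)^k}\le M+\frac{n^2}{(1-\delta)^n},
\]
where $M=M(z,\delta,\omega)<\infty$ is the finite contribution of the first $N_0-1$ terms (finite because $z\notin\{z_k\}$) and the last inequality uses that $k/(1-\delta)^k\le n/(1-\delta)^n$ for every $k\le n$.

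Taking logarithms and dividing by $n$ gives $\frac1n\log|L_n(z)|\le\frac1n\log\bigl(M+n^2(1-\delta)^{-n}\bigr)$, and since the right-hand side tends to $\log\frac1{1-\delta}$ as $n\to\infty$ we obtain $\limsup_n\frac1n\log|L_n(z)|\le\log\frac1{1-\delta}<\epsilon$. As this holds for all $z$ outside a fixed null set and on a fixed almost-sure event, the lemma follows. I expect the only delicate point to be bookkeeping of the quantifiers---the randomness enters only through $N_1$ and $M$ on a single $z$-independent almost-sure event, while the geometric lower bound on $|z-z_n|$ comes from the deterministic null set $F$---so there is no real obstacle here.
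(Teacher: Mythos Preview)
Your proof is correct. The decoupling of the two sources of growth---one Borel--Cantelli on $|a_k|>k$ using only $\mathbb{E}|a_1|<\infty$, and a separate deterministic geometric bound via the null set $F$---is clean, and your bookkeeping of quantifiers (the almost-sure event is $z$-independent, the null set is $\omega$-independent) is exactly right.

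The paper takes a different route: it applies Markov's inequality to the combined event $\sup_{|z|=r}|a_n/(z-z_n)|>e^{n\epsilon}$, bounds the resulting series by $\mathbb{E}|a_1|\sum_n t_n(r)e^{-n\epsilon}$ with $t_n(r)=\sup_{|z|=r}|z-z_n|^{-1}$, and uses the root test together with Lemma~\ref{liminf} to get convergence for a.e.\ $r$, whence Borel--Cantelli gives the bound \emph{uniformly over the circle $|z|=r$}. This uniformity is recorded as Remark~\ref{one} and is later used in Lemma~\ref{tight3} through $M_n(R)=\sup_{|z|=R}|L_n(z)|$. Your argument as written is purely pointwise and does not immediately give Remark~\ref{one}; however, it adapts with one line: on your almost-sure event $|a_k|\le k$, one has $\sup_{|z|=r}|a_k|/|z-z_k|\le k\,t_k(r)$, and Lemma~\ref{liminf} gives $\limsup t_k(r)^{1/k}\le 1$ for a.e.\ $r$, after which your estimate goes through verbatim with $(1-\delta)^{-k}$ replaced by $t_k(r)$. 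So the two approaches are essentially equivalent in strength; yours is a bit more transparent about where the randomness enters, while the paper's packaging is tailored to the uniform-in-circle statement needed downstream.
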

\begin{proof}
	From the hypothesis, Lemma \ref{liminf} and Using Markov's inequality we get  $$\sum_{n=1}^{\infty}\mathbb{P}\left(\sup\limits_{|z|=r}\big|\frac{a_n}{z-z_n}\big|>e^{n\epsilon}\right) \leq \sum_{n=1}^{\infty}\sup\limits_{|z|=r}\frac{\ee{\text{$|a_n|$}}}{|z-z_n|e^{n\epsilon}}.$$ Denoting $t_n(r)=\sup\limits_{|z|=r}\bigg|\frac{1}{z-z_n}\bigg|$ we have
	\begin{equation}
	\sum_{n=1}^{\infty}\sup\limits_{|z|=r}\frac{\ee{\text{$|a_n|$}}}{|z-z_n|e^{n\epsilon}}=\sum_{n=1}^{\infty}\frac{\ee{\text{$|a_n|$}}}{e^{n\epsilon}}t_n(r). \label{eqn:power_series}
	\end{equation} 
	Because $a_n$s are i.i.d. random variables, $\ee{\text{$|a_n|$}}=\ee{\text{$|a_1|$}}$. Using the root test for the convergence of sequences and the Lemma \ref{liminf}, it follows that the right hand side of \eqref{eqn:power_series} is convergent for Lebesgue a.e. $r \in (0,\infty)$. Invoking Borel-Cantelli lemma we can say that $\sup\limits_{|z|=r} \frac{|a_n|}{|z-z_n|}>e^{n\epsilon}$ only for finitely many times. From here we get $|L_n(z)| \leq M_\epsilon + ne^{n\epsilon}$, where $M_\epsilon$ is a finite random number which is obtained by bounding the finite number of terms for which $\sup\limits_{|z|=r} \frac{|a_n|}{|z-z_n|}>e^{n\epsilon}$ is satisfied. Therefore we get that $\limsup\limits_{n \rightarrow \infty}\frac{1}{n}\log|L_n(z)|< \epsilon$ almost surely. 
\end{proof}
Notice that we have proved a stronger version of the Lemma \ref{momentthm3}. We will state this as a remark which will be used further lemmas.
\begin{remark}\label{one}
	Define $M_n(R):=\sup\limits_{|z|=R}|L_n(z)|$. Then for any $\epsilon>0$, we have $$\limsup\limits_{n \rightarrow \infty}\frac{1}{n}\log M_n(R)< \epsilon$$ for almost every $R>0$.
\end{remark}
For proving a similar result for the lower bound of $\log|L_n(z)|$ and establish \eqref{A2}, we need the Kolmogorov-Rogozin inequality  \ref{KR-Inequality} which was stated at the beginning of this chapter.

\begin{lemma}\label{krthm3}
	Let $L_n(z)$ be as in Theorem \ref{thm3}. Then for any $\epsilon>0$, and Lebesgue a.e. $z \in \mathbb{C}$ $$\lim\limits_{n \rightarrow \infty}\mathbb{P}\left(\frac{1}{n}\log|L_n(z)|<-\epsilon\right)=0.$$  
\end{lemma}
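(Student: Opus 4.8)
The plan is to reproduce the Kolmogorov--Rogozin scheme of Lemmas \ref{kolmogorov-rogozin} and \ref{kolmogorov-rogozin-tri}, with the randomness now supplied by the coefficients $a_k$ rather than by a two-point choice of pole. Fix $\epsilon>0$ and restrict attention to $z$ outside a Lebesgue-null set: discard the countably many points $z_1,z_2,\dots$ and the null set of $z$ admitting no compact $K_z$ as in the hypothesis of Theorem \ref{thm3}. For such $z$ set $R:=\sup\{|z-w|:w\in K_z\}<\infty$. Since $L_n(z)=\sum_{k=1}^n X_k$ with $X_k:=a_k/(z-z_k)$ a sum of independent $\mathbb{C}\cong\mathbb{R}^2$-valued random variables, applying the Kolmogorov--Rogozin inequality \eqref{kol-rog-ineq} with all $\delta_k=\delta=e^{-n\epsilon}$ gives
\[
\mathbb{P}\!\left(\tfrac{1}{n}\log|L_n(z)|<-\epsilon\right)=\mathbb{P}\bigl(|L_n(z)|<e^{-n\epsilon}\bigr)\le \frac{C}{\sqrt{\sum_{k=1}^{n}\bigl(1-Q(X_k,e^{-n\epsilon})\bigr)}},
\]
so the whole problem reduces to showing $\sum_{k=1}^{n}\bigl(1-Q(X_k,e^{-n\epsilon})\bigr)\to\infty$.

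First I would record the scaling identity for the concentration function: since $c_k:=(z-z_k)^{-1}\neq0$, the substitution $\alpha\mapsto c_k^{-1}\alpha$ gives $Q(X_k,t)=Q(c_k a_1,t)=Q\bigl(a_1,\,t|z-z_k|\bigr)$, and $t\mapsto Q(a_1,t)$ is non-decreasing. By hypothesis there are infinitely many indices $k$ with $z_k\in K_z$, and for these $|z-z_k|\le R$, so that $Q(X_k,e^{-n\epsilon})\le Q(a_1,Re^{-n\epsilon})$. Writing $N_n:=\#\{k\le n:z_k\in K_z\}$, which is non-decreasing with $N_n\to\infty$, we obtain $\sum_{k=1}^n\bigl(1-Q(X_k,e^{-n\epsilon})\bigr)\ge N_n\bigl(1-Q(a_1,Re^{-n\epsilon})\bigr)$.

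It then remains to control $Q(a_1,Re^{-n\epsilon})$ as $n\to\infty$. As $t\downarrow0$ the concentration function decreases to $q_0:=\sup_{\alpha\in\mathbb{C}}\mathbb{P}(a_1=\alpha)$ (a standard fact, using only that $a_1$ is an honest finite-valued random variable), and $q_0<1$ precisely because $a_1$ is non-degenerate; this is the step where the genuine randomness of $a_1$ is indispensable, since if $a_1$ were a.s.\ constant then $L_n$ would be deterministic and the conclusion can fail. Hence $Q(a_1,Re^{-n\epsilon})\le\tfrac{1+q_0}{2}$ for all large $n$, so $\sum_{k=1}^n(1-Q(X_k,e^{-n\epsilon}))\ge\tfrac{1-q_0}{2}N_n\to\infty$ and the displayed upper bound tends to $0$. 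This is exactly statement \eqref{A2} for Theorem \ref{thm3}.

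The step I expect to be the crux is the last one: that $Q(a_1,t)\to q_0<1$. The scaling identity, the uniform bound $|z-z_k|\le R$ on the compact set $K_z$, and $N_n\to\infty$ are routine bookkeeping once $z$ lies outside the appropriate null set; the substantive input is that the coefficients $a_k$ spread mass on scales much larger than $e^{-n\epsilon}$. This plays the role that the hypothesis ``$a_k\neq b_k$ for infinitely many $k$'' plays in Lemma \ref{kolmogorov-rogozin}, and that $\sum_{i\le n}\log_+\tfrac{1}{|a_{n,i}-b_{n,i}|}=o(n^2)$ plays in Lemma \ref{kolmogorov-rogozin-tri}: in all three cases it is what guarantees that the summands $1-Q(X_k,e^{-n\epsilon})$ are bounded below along a set of indices of size tending to infinity.
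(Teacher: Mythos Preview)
Your argument is correct and follows essentially the same route as the paper's own proof: apply the Kolmogorov--Rogozin inequality, restrict to the indices $k$ with $z_k\in K_z$, use the bound $|z-z_k|\le R$ (the paper uses $d(z,K_z)+\operatorname{diam}(K_z)$, which plays the identical role) to reduce to the concentration function of $a_1$ at scale $Re^{-n\epsilon}$, and conclude from non-degeneracy of $a_1$ together with $N_n\to\infty$. The only cosmetic difference is that the paper first drops the summands with $z_k\notin K_z$ via the monotonicity of $Q$ in the number of independent summands and then applies Kolmogorov--Rogozin, whereas you apply Kolmogorov--Rogozin to the full sum and discard the unwanted terms in the resulting lower bound; these are equivalent.
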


\begin{proof}
	Fix $z \in \mathbb{C}$ which is not in the exceptional set $F$. Let $z_{i_1},z_{i_2},\dots z_{i_{l_n}}$ be the points in $K_z$ from the set  $\{z_1,z_2,\dots,z_n\}$. From the definition of concentration function and the fact that the concentration function $ Q(X_1+X_2+\dots+X_n,\delta)$ is decreasing in $n$ we get,
	\begin{align}
	\mathbb{P}\left(|L_n(z)|\leq e^{-n\epsilon}\right) & \leq  Q\left(\sum_{i=1}^{n}\frac{a_i}{z-z_i},e^{-n\epsilon}\right), \\ 
	&\leq   Q\left(\sum_{k=1}^{l_n}\frac{a_{i_k}}{z-z_{i_k}},e^{-n\epsilon}\right). 
	\end{align} 
	The random variables $\frac{a_{i_k}}{z-z_{i_k}}$s are independent. Hence we can apply Kolmogorov-Rogozin inequality to get,
	\begin{align}
	\mathbb{P}\left(|L_n(z)|\leq e^{-n\epsilon}\right) & \leq  C_\epsilon\left\{\sum_{i=1}^{l_n}\left(1-Q\bigg(\frac{a_{i_k}}{z-z_{i_k}},e^{-n\epsilon}\bigg)\right)\right\}^{-\frac{1}{2}}.
	\end{align}
	Because $|z-z_{i_k}|\leq d(z,K_z)+diam(K_z)$, from above we get, 
	\begin{align}
	\mathbb{P}\left(|L_n(z)|\leq e^{-n\epsilon}\right) & \leq  C_\epsilon \left\{\sum_{i=1}^{l_n}\left(1-Q\left(a_{i_k},\left(d(z,K_z)+diam(K_z)\right)e^{-n\epsilon}\right)\right)  \right\}^{-\frac{1}{2}}\label{eqn:lemma5.4.8:1}
	\end{align}
	Because $a_{i_k}$s are non-degenerate i.i.d random variables and $l_n\rightarrow\infty$, the right hand side of \eqref{eqn:lemma5.4.8:1} converges to $0$ as $n\rightarrow\infty$. Hence the lemma is proved.
\end{proof}
It remains to show that the hypothesis of Theorem \ref{thm3} implies \eqref{A3}. Fix  $R>r$. The idea here is to write the function $\log|L_n(z)|$ for $z \in \text{$\mathbb{D}$}_r$ as an integral on the boundary of a larger disk $\text{$\mathbb{D}$}_R$ and bound the integral uniformly on the disk $\text{$\mathbb{D}$}_r.$ This is facilitated by Poisson-Jensen's formula for meromorphic functions.   
The Poisson-Jensen's formula is stated below. Let $\alpha_1, \alpha_2, \dots \alpha_k$ and $\beta_1,\beta_2, \dots \beta_\ell$ be the zeros and poles of a meromorphic function $f$ in $\text{$\mathbb{D}$}_R$. Then
\begin{align}	
\log|f(z)| = \frac{1}{2\pi}\int_{0}^{2\pi}\Re\bigg(\frac{Re^{i\theta}+z}{Re^{i\theta}-z}\bigg)\log|f(Re^{i\theta})|d\theta - \sum_{m=1}^{k}\log\bigg|\frac{R^{2}-\overline{\alpha}_jz}{R(z-\alpha_j)}\bigg|\\ + \sum_{m=1}^{l}\log\bigg|\frac{R^{2}-\overline{\beta}_jz}{R(z-\beta_j)}\bigg|
\end{align}

The following lemma \ref{tighttwo} gives an estimate of the boundary integral obtained in the Poisson-Jensen's formula when applied for the function $\log|L_n(z)|$ 	 at $z=0$. Define \[\text{$\mathcal{I}$}\text{$_n(z,R)$}:=\frac{1}{2\pi}\int_{0}^{2\pi}\Re\bigg(\dfrac{Re^{i\theta}+z}{Re^{i\theta}-z}\bigg)\log|L_n(Re^{i\theta})|d\theta.\] 
\begin{lemma}\label{tighttwo}
	There is a constant $c_2>0$ such that 
	\begin{align}
	\lim\limits_{n \rightarrow \infty}\mathbb{P}\left( \dfrac{1}{n}\text{$\mathcal{I}$}(0,R)\leq-c_2\right) = 0.
	\end{align}
\end{lemma}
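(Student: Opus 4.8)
The plan is to evaluate $\mathcal{I}_n(0,R)$ by Jensen's formula for the meromorphic function $L_n$ and to observe that the only term which could be dangerously negative, $\log|L_n(0)|$, is $\ge -n$ with probability tending to $1$, while everything else is controlled deterministically from the fact that $0$ is not a limit point of $\{z_k\}$. Preliminaries: we keep the convention $\omega=0$, and since only finitely many $z_k$ lie in a small ball around a non-limit point, we may choose $\omega$ so that, in addition, $0\ne z_k$ for every $k$; fix $\delta>0$ with $N_\delta:=\#\{k:|z_k|<\delta\}<\infty$. Recall that $0\notin F$ (established just after Lemma~\ref{liminf}); moreover $0$ admits a compact set $K_0$ with $d(0,K_0)>0$ containing infinitely many of the $z_k$ — take $K_0:=K\setminus B(0,\delta/2)$, where $K$ is any compact set carrying infinitely many $z_k$ (one exists by the hypothesis of Theorem~\ref{thm3}), since this deletes only finitely many of them.

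\textbf{Step 1: $\mathbb{P}(|L_n(0)|\le e^{-n})\to 0$.} The two facts just recalled are precisely what makes the argument in the proof of Lemma~\ref{krthm3} run verbatim at the single point $z=0$: decompose $L_n(0)=\sum_{i\le n,\ z_i\in K_0}\tfrac{a_i}{-z_i}+(\text{rest})$, apply the Kolmogorov--Rogozin inequality to the first group of independent summands with $\delta_i=e^{-n}$, and use that $Q\!\left(\tfrac{a_i}{-z_i},e^{-n}\right)=Q\!\left(a_i,|z_i|e^{-n}\right)\le Q\!\left(a_1,(d(0,K_0)+\operatorname{diam} K_0)e^{-n}\right)$, which tends to $Q(a_1,0^{+})<1$ because $a_1$ is non-degenerate, while the number of indices in the first group tends to $\infty$. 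This gives a bound of order $\big(\#\{i\le n:z_i\in K_0\}\big)^{-1/2}$, hence $\mathbb{P}(|L_n(0)|\le e^{-n})\to 0$.

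\textbf{Step 2: Jensen's formula and bookkeeping.} On the event $E_n:=\{|L_n(0)|>e^{-n}\}$ the origin is neither a zero nor a pole of $L_n$ (the latter because $z_k\ne 0$), so Jensen's formula for $L_n$ in $\mathbb{D}_R$ (with the usual convention for zeros and poles on $|z|=R$) gives
\[
\mathcal{I}_n(0,R)=\log|L_n(0)|+\sum_{\alpha:\,L_n(\alpha)=0,\ |\alpha|<R}\log\frac{R}{|\alpha|}-\sum_{k\le n:\ |z_k|<R}\log\frac{R}{|z_k|}.
\]
The middle sum is $\ge 0$ and may be dropped; in the last sum, the $N_\delta$ terms with $|z_k|<\delta$ contribute a fixed finite constant $C_\delta$ (finite since $z_k\ne 0$) and the at most $n$ terms with $\delta\le|z_k|<R$ each contribute $\le\log_+(R/\delta)$. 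Thus on $E_n$,
\[
\tfrac1n\mathcal{I}_n(0,R)\ \ge\ \tfrac1n\log|L_n(0)|-\tfrac{C_\delta}{n}-\log_+\!\tfrac{R}{\delta}\ \ge\ -1-\tfrac{C_\delta}{n}-\log_+\!\tfrac{R}{\delta},
\]
so with $c_2:=2+\log_+(R/\delta)$ we get $\tfrac1n\mathcal{I}_n(0,R)\ge -c_2$ on $E_n$ for all $n\ge C_\delta$, whence $\mathbb{P}\!\left(\tfrac1n\mathcal{I}_n(0,R)\le -c_2\right)\le\mathbb{P}(E_n^{c})\to 0$, which is the claim. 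The main difficulty is concentrated in Step 1: one must run the Kolmogorov--Rogozin argument of Lemma~\ref{krthm3} at the lone point $0$, which requires not only $0\notin F$ (already in hand) but also a compact set bounded away from $0$ carrying infinitely many $z_k$ — this does not follow from the ``for a.e.\ $z$'' hypothesis by itself, only after combining it with ``$0$ is not a limit point''. Once $|L_n(0)|$ is known not to be exponentially small, everything else is routine.
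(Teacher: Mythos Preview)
Your proof is correct and follows essentially the same route as the paper: apply Jensen's formula to $L_n$ at $z=0$, observe that the sum over zeros of $L_n$ contributes nonnegatively, bound the pole sum $\sum_{|z_k|<R}\log(R/|z_k|)$ deterministically using that $0$ is not a limit point of $\{z_k\}$, and handle $\frac{1}{n}\log|L_n(0)|$ via the Kolmogorov--Rogozin argument of Lemma~\ref{krthm3}. Your write-up is in fact more scrupulous than the paper's on one point: the paper invokes Lemma~\ref{krthm3} at the single point $z=0$ by simply noting $0\notin F$, whereas you correctly observe that one also needs a compact set $K_0$ with $d(0,K_0)>0$ containing infinitely many $z_k$, and you supply one by intersecting any $K_z$ from the hypothesis with the complement of a small ball about the origin.
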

\begin{proof}
	From Poisson-Jensen's formula at $0$ we get,
	\begin{equation}
	\dfrac{1}{n}\text{$\mathcal{I}$}\text{$_n(0;R) = \dfrac{1}{n}\log|L_n(0)| +\dfrac{1}{n}\sum_{m=1}^{k}\log\bigg|\dfrac{z_{i_m}}{R}\bigg| - \dfrac{1}{n}\sum_{m=1}^{l}\log\bigg|\dfrac{\alpha_{i_m}}{R}\bigg|$},\label{eqn:lemma5.4.9:1}
	\end{equation}
	
	where $z_{i_m}s$ and $\alpha_{i_m}s$ are zeros and critical points respectively of $P_n(z)$ in the disk $\text{$\mathbb{D}$}_R$. Because $0$ is not a limit point of $\{z_1,z_2, \dots\}$, $\left\{\dfrac{1}{n}\sum_{m=1}^{k}\log\big|\dfrac{z_{i_m}}{R}\big|\right\}_{n\geq1}$ is a sequence of negative numbers bounded from below. $\left\{\dfrac{1}{n}\sum_{m=1}^{l}\log\bigg|\dfrac{\alpha_{i_m}}{R}\bigg|\right\}_{n\geq1}$ is also a sequence of negative numbers. Therefore the last two terms in the right hand side of \eqref{eqn:lemma5.4.9:1} are bounded below. Because $0$ is not in exceptional set $F$, from Lemma \ref{krthm3} we have that the sequence $\lim\limits_{n\rightarrow\infty}\mathbb{P}\left(\frac{1}{n}\log|L_n(z)|<-1\right)=0$ is bounded from below. Therefore there exists $C_1$ such that $$\lim\limits_{n\rightarrow\infty}\mathbb{P}\left(\frac{1}{n}\log|L_n(z)|<-1 \text{ and }\dfrac{1}{n}\sum_{m=1}^{k}\log\bigg|\dfrac{z_{i_m}}{R}\bigg| - \dfrac{1}{n}\sum_{m=1}^{l}\log\bigg|\dfrac{\alpha_{i_m}}{R} <-C_1\right)=0.$$ Choosing $c_2=C_1+1$ the statement of lemma is established.
\end{proof}

Using above lemma \ref{tight} and exploiting formula of Poisson kernel for disk we will now obtain an uniform bound for the corresponding integral $\text{$\mathcal{I}$}_n(z,R)$.
\begin{lemma}\label{tight3}
	There is a constant $b>0$ such that for any $z \in \text{$\mathbb{D}$}\text{$_r$}$ 
	\begin{equation}
	\lim\limits_{n\rightarrow \infty}\mathbb{P}\left(\dfrac{1}{n}\text{$\mathcal{I}$}_n(z,R)\leq -b\right)=0.
	\end{equation}
\end{lemma}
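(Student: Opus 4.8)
The plan is to deduce the uniform bound from the already-established central estimate of Lemma \ref{tighttwo} by exploiting that, for $z\in\mathbb{D}_r$ with $r<R$, the Poisson kernel
\[
P_R(z,\theta)=\Re\!\left(\frac{Re^{i\theta}+z}{Re^{i\theta}-z}\right)=\frac{R^2-|z|^2}{|Re^{i\theta}-z|^2}
\]
is nonnegative and bounded above by the constant $C:=\frac{R^2}{(R-r)^2}$, uniformly in $z$ and $\theta$, whereas $P_R(0,\theta)\equiv1$. First I would split the boundary integrand into positive and negative parts, setting
\[
A_n:=\frac{1}{2\pi}\int_0^{2\pi}\log_+|L_n(Re^{i\theta})|\,d\theta,\qquad
B_n:=\frac{1}{2\pi}\int_0^{2\pi}\log_-|L_n(Re^{i\theta})|\,d\theta ,
\]
so that $\mathcal{I}_n(0,R)=A_n-B_n$. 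Dropping the (nonnegative) positive-part contribution and using $0\le P_R(z,\theta)\le C$, I would then get, for every $z\in\mathbb{D}_r$,
\[
\frac1n\mathcal{I}_n(z,R)\ \ge\ -\frac{1}{2\pi n}\int_0^{2\pi}P_R(z,\theta)\log_-|L_n(Re^{i\theta})|\,d\theta\ \ge\ -\frac{C}{n}B_n\ =\ -\frac{C}{n}A_n+\frac{C}{n}\mathcal{I}_n(0,R).
\]

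Next I would observe that both terms on the right are controlled by results already in hand. Choosing $R$ outside the relevant null set, Remark \ref{one} gives $\frac1n\log M_n(R)\to 0$ almost surely, and since $0\le A_n\le\log_+M_n(R)$ this yields $\frac1n A_n\to 0$ almost surely, hence $\mathbb{P}(\frac1n A_n\ge 1)\to 0$; and Lemma \ref{tighttwo} gives $\mathbb{P}(\frac1n\mathcal{I}_n(0,R)\le -c_2)\to 0$. On the intersection of the two complementary events, which has probability tending to $1$, the displayed inequality forces $\frac1n\mathcal{I}_n(z,R)\ge -C(1+c_2)$ simultaneously for all $z\in\mathbb{D}_r$. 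Taking $b:=C(1+c_2)+1$ then bounds $\mathbb{P}(\frac1n\mathcal{I}_n(z,R)\le -b)$ by the probability of that small event, uniformly in $z\in\mathbb{D}_r$ and for all large $n$, which is exactly the claim.

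I do not expect a genuinely hard step here: the lemma is essentially a uniformization of Lemma \ref{tighttwo} across the disk, made possible by the comparability of the Poisson kernel to its value at the center on compact subsets of $\mathbb{D}_R$. The only points requiring (routine) care are that $R$ may be chosen to avoid the measure-zero exceptional sets of Lemma \ref{liminf}, Remark \ref{one} and the hypotheses of Lemma \ref{tighttwo} (legitimate since \eqref{A3} is asserted only for each fixed $r$, and one is free to enlarge $r$ to a generic $R$, and also to ensure $L_n$ has no zeros or poles on $|z|=R$ so that $A_n,B_n$ are finite), and that the constant $b$ comes out independent of $z$, which is automatic from the uniform bounds $\frac{R-r}{R+r}\le P_R(z,\theta)\le\frac{R^2}{(R-r)^2}$.
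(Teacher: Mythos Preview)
Your argument is correct and follows essentially the same route as the paper: both proofs bound the Poisson kernel above and below by positive constants on $\mathbb{D}_r$, split $\log|L_n|$ into positive and negative parts, rewrite the negative-part integral as $A_n-\mathcal{I}_n(0,R)$, and then invoke Remark~\ref{one} for $A_n$ and Lemma~\ref{tighttwo} for $\mathcal{I}_n(0,R)$. The only cosmetic difference is that you discard the positive-part contribution outright (using $P_R\ge 0$) whereas the paper retains it with the lower Poisson constant $C_3$; this changes the resulting constant $b$ but not the argument.
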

\begin{proof}
	We will decompose the function $\log|L_n(z)|$ into its positive and negative components. Let $\log|L_n(z)|=\log_+|L_n(z)|-\log_-|L_n(z)|$, where $\log_+|L_n(z)|$ and $\log_-|L_n(z)|$ are positive. Using this we can write,
	
	\begin{align}
	2\pi\text{$\mathcal{I}$}\text{$_n(z)$} = & \int_{0}^{2\pi}\log|L_n(Re^{i\theta})|\Re\bigg(\dfrac{Re^{i\theta}+z}{Re^{i\theta}-z}\bigg)d\theta, \\
	= & \int_{0}^{2\pi}\log_+|L_n(Re^{i\theta})|\Re\bigg(\dfrac{Re^{i\theta}+z}{Re^{i\theta}-z}\bigg)d\theta -\int_{0}^{2\pi}\log_-|L_n(Re^{i\theta})|\Re\bigg(\dfrac{Re^{i\theta}+z}{Re^{i\theta}-z}\bigg)d\theta.
	\end{align}

	We can find constants $C_3$  and $C_4$ such that for any $z \in \text{$\mathbb{D}$}\text{$_r$}$,
	$0 < C_3 \leq \Re\bigg(\dfrac{Re^{i\theta}+z}{Re^{i\theta}-z}\bigg) \leq C_4 < \infty$ is satisfied. Therefore,
	\begin{align}
	2\pi\text{$\mathcal{I}$}\text{$_n(z)$} \geq & C_3\int_{0}^{2\pi}\log_+|L_n(Re^{i\theta})|d\theta - C_4\int_{0}^{2\pi}\log_-|L_n(Re^{i\theta})|^-d\theta,\\
	\geq & 2\pi C_3\text{$\mathcal{I}$}\textbf{$_n(0)-2\pi(C_4-C_3) M_n(R)$}.\label{eqn:lemma5.4.10:1}
	\end{align}
	From the Remark \ref{one} and Lemma \ref{tighttwo}  we get 
	\begin{equation}
	\lim\limits_{n \rightarrow \infty}\mathbb{P}\left( \dfrac{1}{n}\text{$\mathcal{I}$}\text{$_n(0)\leq -c \text{ or } \dfrac{1}{n}M_n(R) > 1 $}\right) = \textbf{$0$} \label{eqn:lemma5.4.10:2}
	\end{equation}
	
	The proof is completed from above \eqref{eqn:lemma5.4.10:2} and \eqref{eqn:lemma5.4.10:1} and by choosing $b=2\pi(cC_3+C_4-C_3)$.
\end{proof}

To complete the argument we now need to control the other terms in Poisson-Jensen's formula. Let $\xi_{i_m}s$ and $\beta_{i_m}s$ be the poles and zeros of $L_n(z)$ in $\text{$\mathbb{D}$}_R$ and  $k,l(\leq n)$ are the number of zeros and poles of $L_n(z)$ respectively in $\text{$\mathbb{D}$}_R$. Now applying Poisson-Jensen's formula to $L_n(z)$ we have,

\begin{align}	
\frac{1}{n^{2}}\int_{\text{$\mathbb{D}$}\text{$_r$}}^{}&\log^{2}|L_n(z)|dm(z)   \\ &=\frac{1}{n^{2}}\int_{\text{$\mathbb{D}$}\text{$_r$}}\left(\text{$\mathcal{I}$}\textbf{$_n(z)+\sum\limits_{m=1}^{k}\log\biggl|\frac{R(z-\beta_{i_m})}{R^2-\overline{\beta}_{i_m}z}\biggr|+\sum\limits_{m=1}^{l}\log\biggl|\frac{R(z-\xi_{i_m})}{R^2-\overline{\xi}_{i_m}z}\biggr|$}\right)^2dm(z) 
\end{align}
Invoking a case of Cauchy-Schwarz inequality  $(a_1+a_2+\dots+a_n)^2\leq n(a_1^2+a_2^2+\dots+a_n^2)$ repeatedly we get,

\begin{align}
\int_{\text{$\mathbb{D}$}\text{$_r$}}^{}\dfrac{1}{n^{2}}&\log^{2}|L_n(z)|dm(z)
\\ & \leq  \dfrac{3}{n^{2}}\int_{\text{$\mathbb{D}$}\text{$_r$}}^{}|\text{$\mathcal{I}$}_n(z)|^2dm(z) + \dfrac{3}{n^{2}}\int_{\text{ $\mathbb{D}$}_r}^{}\left(\sum_{m=1}^{k}\log\bigg|\dfrac{R(z-\beta_{i_m})}{R^{2}-\overline{\beta}_{i_m}z}\bigg|\right)^{2}dm(z)\\ 
&+\dfrac{3}{n^{2}}\int_{\mathbb{D}\text{$_r$}}^{}\left(\sum_{m=1}\log\bigg|\dfrac{R(z-\xi_{i_m})}{R^{2}-\overline{\xi}_{i_m}z}\bigg|\right)^{2}dm(z),\\
&\leq  \int_{\mathbb{D}\text{$_r$}}^{}\dfrac{3}{n^{2}}|\text{$\mathcal{I}$}_n(z)|^2dm(z) + \dfrac{3k}{n^{2}}\sum_{m=1}^{k}\int_{\mathbb{D}\text{$_r$}}^{}\log^{2}\bigg|\dfrac{R(z-\beta_{i_m})}{R^{2}-\overline{\beta}_{i_m}z}\bigg|dm(z)\\
&+\dfrac{3l}{n^{2}}\sum_{m=1}^{l}\int_{\mathbb{D}\text{$_r$}}^{}\log^{2}\bigg|\dfrac{R(z-\xi_{i_m})}{R^{2}-\overline{\xi}_{i_m}z}\bigg|dm(z).
\end{align}
For $z \in\text{ $\mathbb{D}$}_r$, we have $|R^2-\overline{\beta}_{i_m}z|\geq R(R-r)$. Applying this inequality in the above we get,
\begin{align}
\int_{\mathbb{D}\text{$_r$}}^{}\dfrac{1}{n^{2}}\log^{2}|L_n(z)|dm(z)
& \leq  \int_{\mathbb{D}\text{$_r$}}^{}\dfrac{3}{n^{2}}|\text{$\mathcal{I}$}_n(z)|^2dm(z) + \dfrac{3k}{n^{2}}\sum_{m=1}^{k}\int_{\mathbb{D}\text{$_r$}}^{}\log^{2}\bigg|\dfrac{z-\beta_{i_m}}{R-r}\bigg|dm(z)\\
&+\dfrac{3l}{n^{2}}\sum_{m=1}^{l}\int_{\mathbb{D}\text{$_r$}}^{}\log^{2}\bigg|\dfrac{z-\xi_{i_m}}{R-r}\bigg|dm(z).  \label{eqn:poisson-jensen:1}
\end{align}

From the Lemmas \ref{tighttwo} and \ref{tight3}, the corresponding sequence $\frac{3}{n^{2}}\int_{\mathbb{D}\text{$_r$}}^{}|\text{$\mathcal{I}$}$$_n(z)|^2dm(z)$ is tight. The function $\log^{2}|z|$ is an integrable function on any bounded set in $\mathbb{C}$. Combining these facts and above inequality \eqref{eqn:poisson-jensen:1} we have that the sequences \linebreak $\left\{\int_{\mathbb{D}\text{$_r$}}^{}\frac{1}{n^{2}}\log^{2}|L_n(z)|dm(z)\right\}_{n\geq1}$ are tight. Hence the hypothesis of the Theorem \ref{thm3} implies \eqref{A3}. Therefore the proof of the theorem is complete.

\section*{Acknowledgments} The author is grateful to M. Krishnapur for having several discussions during the course of this work. The author is also thankful to D. Zaporozhets for pointing the Example \ref{reeds}.
\bibliographystyle{plain}
\bibliography{paper}
\end{document}